\documentclass[12pt]{article}
\topmargin -1.5cm \textwidth 16cm \textheight 22cm \hoffset-1cm

\usepackage{amsmath}
\usepackage{amssymb}
\usepackage{amsthm}
\usepackage{extarrows}

\newtheorem{theorem}{Theorem}
\newtheorem{corollary}{Corollary}

\newtheorem{definition}{Definition}

\newtheorem{proposition}{Proposition}

\newcommand{\Mk}{\mathcal{M}_k}
\newcommand{\Mkk}{\mathcal{M}_{k-1}}
\newcommand{\Hk}{\mathcal{H}_k}
\newcommand{\Rm}{\mathbb{R}^m}
\newcommand{\Clm}{\mathcal{C}l_m}
\newcommand{\C}{\mathbb{C}}
\newcommand{\Sm}{\mathbb{S}^{m-1}}
\newcommand{\scs}{\mathcal{S}}

\begin{document}

\title{On Some Conformally Invariant Operators in Euclidean Space}

\author{Chao Ding and John Ryan\\
\emph{\small Department of Mathematics, University of Arkansas, Fayetteville, AR 72701, USA}}

\date{}

\maketitle

\begin{abstract}
The aim of this paper is to correct a mistake in earlier work on the conformal invariance of Rarita-Schwinger operators and use the method of correction to develop properties of some conformally invariant operators in the Rarita-Schwinger setting. We also study properties of some other Rarita-Schwinger type operators, for instance, twistor operators and dual twistor operators.  This work is also intended as an attempt to motivate the study of Rarita-Schwinger operators via some representation theory. This calls for a review of earlier work by Stein and Weiss.
\end{abstract}
{\bf Keywords:}\quad Irreducible representations, Stein-Weiss type operators, Rarita-Schwinger type operators, Almansi-Fischer decomposition, Iwasawa decomposition, Conformal invariance, Integral formulas.

\section{Introduction}\hspace*{\fill} \\
\par
In representation theory for Lie groups one is interested in irreducible representation spaces. In particular, for the group $SO(m)$ one might consider the representation space of all harmonic functions on $\Rm$. This space is invariant under the action of $O(m)$, but this space is not irreducible. It decomposes into the infinite sum of harmonic polynomials each homogeneous of degree $k$, $1<k<\infty$. Each of these spaces is irreducible for $SO(m)$. See for instance \cite{G}. Hence, one may consider functions $f:U\longrightarrow \Hk$ where $U$ is a domain in $\Rm$ and $\Hk$ is the space of real valued harmonic polynomials homogeneous of degree $k$. If $\Hk$ is the space of Clifford algebra valued harmonic polynomials homogeneous of degree $k$, then an Almansi-Fischer decomposition result tells us that 
$$\Hk=\Mk\oplus u\Mkk.$$
Here $\Mk$ and $\Mkk$ are spaces of Clifford algebra valued polynomials homogeneous of degree $k$ and $k-1$ in the variable $u$, respectively and are solutions to the Dirac equation $D_uf(u)=0,$ where $D_u$ is the Euclidean Dirac operator. The elements of these spaces are known as homogeneous \emph{monogenic} polynomials. In this case the underlying group $SO(m)$ is replaced by its double cover $Spin(m)$. See \cite{Br}.
\\
\par
Classical Clifford analysis is the study of and applications of Dirac type operators. In this case, the functions considered take values in the spinor space, which is an irreducible representation of $Spin(m)$. If we replace the spinor space with some other irreducible representations, for instance, $\Mk$, we will get the Rarita-Schwinger operator as the first generalization of the Dirac operator in higher spin theory. See, for instance \cite{B}. The conformal invariance of this operator, its fundamental solutions and some associated integral formulas were first provided in \cite{B},  and then \cite{D}. However, some proofs in \cite{D} rely on the mistake that the Dirac operator in the Rarita-Schwinger setting is also conformally invariant. This will be explained and corrected in Section 3.\\
\par
From the construction of the Rarita-Schwinger operators, we notice that some other Rarita-Schwinger type operators can be constructed similarly, for instance, twistor operators, dual twistor operators and the remaining operators, see \cite{B,D,Li} . It is worth pointing out that we need to be careful for the reasons we mentioned above when we establish properties for Rarita-Schwinger type operators. Hence, we give the details of proofs of some properties and integral operators for Rarita-Schwinger type operators.
\\
\par
This paper is organized as follows: after a brief introduction to Clifford algebras and Clifford analysis in Section 2, representation theory of the Spin group and Stein-Weiss operators are used to motivate Dirac operators and Rarita-Schwinger operators. On the one hand the Dirac operator can be introduced and motivated by an adapted version of Stokes' Theorem. See \cite{E}. Motivation for Rarita-Schwinger operators seem better suited via representation theory, particularly for spin and special orthogonal groups. In Section 3, we will use a counter-example to show that the Dirac operator is not conformally invariant in the Rarita-Schwinger setting. Then we give a proof of conformal invariance of the Rarita-Schwinger operators and we provide the intertwining operators for the Rarita-Schwinger operators.
Motivated by the Almansi-Fischer decomposition mentioned above, using similar construction with the Rarita-Schwinger operator, we can consider conformally invariant operators between $\Mk$-valued functions and $u\Mkk$-valued functions. This idea brings us other Rarita-Schwinger type operators, for instance, twistor and dual twistor operators. More details of the construction and properties of these operators can be found in Section 4.
\section*{Acknowledgement}
The authors wish to thank the referee for helpful suggestions that improved the manuscript. The authors are also grateful to Bent \O rsted for communications pointing out that the intertwining operators for the Rarita-Schwinger operators are special cases of Knapp-Stein intertwining operators in higher spin theory (\cite{CO,KS}).
\section{Preliminaries}
\subsection{Clifford algebra}\hspace*{\fill} \\
\par
A real Clifford algebra, $\mathcal{C}l_{m},$ can be generated from $\mathbb{R}^m$ by considering the
relationship $$\underline{x}^{2}=-\|\underline{x}\|^{2}$$ for each
$\underline{x}\in \mathbb{R}^m$.  We have $\mathbb{R}^m\subseteq Cl_{m}$. If $\{e_1,\ldots, e_m\}$ is an orthonormal basis for $\mathbb{R}^m$, then $\underline{x}^{2}=-\|\underline{x}\|^{2}$ tells us that $$e_i e_j + e_j e_i= -2\delta_{ij},$$ where $\delta_{ij}$ is the Kronecker delta function. Similarly, if we replace $\Rm$ with $\C^m$ in the previous definition and consider the relationship
$$z^2=-||z||^2=-z_1^2-z_2^2-\cdots-z_m^2,\ where\ z=(z_1,z_2,\cdots,z_m)\in\C ^m,$$
 we get complex Clifford algebra $\Clm (\C)$, which can also be defined as the complexification of the real Clifford algebra
$$\Clm (\C)=\Clm\otimes\C.$$
\par
In this paper, we deal with the real Clifford algebra $\Clm$ unless otherwise specified. An arbitrary element of the basis of the Clifford algebra can be written as $e_A=e_{j_1}\cdots e_{j_r},$ where $A=\{j_1, \cdots, j_r\}\subset \{1, 2, \cdots, m\}$ and $1\leq j_1< j_2 < \cdots < j_r \leq m.$
Hence for any element $a\in \Clm$, we have $a=\sum_Aa_Ae_A,$ where $a_A\in \mathbb{R}$. We will need the following anti-involutions:
\begin{itemize}
\item Reversion:\\
\begin{eqnarray*}
\tilde{a}=\sum_{A} (-1)^{|A|(|A|-1)/2}a_Ae_A,
\end{eqnarray*}
where $|A|$ is the cardinality of $A$. In particular, $\widetilde{e_{j_1}\cdots e_{j_r}}=e_{j_r}\cdots e_{j_1}$. Also $\widetilde{ab}=\tilde{b}\tilde{a}$ for $a,\ b\in\mathcal{C}l_m$.
\item Clifford conjugation:\\
\begin{eqnarray*}
\bar{a}=\sum_{A} (-1)^{|A|(|A|+1)/2}a_Ae_A,
\end{eqnarray*}
satisfying $\overline{e_{j_1}\cdots e_{j_r}}=(-1)^re_{j_r}\cdots e_{j_1}$ and $\overline{ab}=\bar{b}\bar{a}$ for $a,\ b\in\mathcal{C}l_m$.
\end{itemize}

The Pin and Spin groups play an important role in Clifford analysis. The Pin group can be defined as $$Pin(m)=\{a\in \mathcal{C}l_m: a=y_1y_2\dots y_{p},\ where\ y_1,\dots,y_{p}\in\mathbb{S}^{m-1},\ p\in\mathbb{N}\},$$ 
where $\mathbb{S} ^{m-1}$ is the unit sphere in $\Rm$. $Pin(m)$ is clearly a group under multiplication in $\mathcal{C}l_m$. \\
\par
Now suppose that $a\in \mathbb{S}^{m-1}\subseteq \mathbb{R}^m$, if we consider $axa$, we may decompose
$$x=x_{a\parallel}+x_{a\perp},$$
where $x_{a\parallel}$ is the projection of $x$ onto $a$ and $x_{a\perp}$ is the rest, perpendicular to $a$. Hence $x_{a\parallel}$ is a scalar multiple of $a$ and we have
$$axa=ax_{a\parallel}a+ax_{a\perp}a=-x_{a\parallel}+x_{a\perp}.$$
So the action $axa$ describes a reflection of $x$ across the hyperplane perpendicular to $a$. By the Cartan-Dieudonn$\acute{e}$ Theorem each $O\in O(m)$ is the composition of a finite number of reflections. If $a=y_1\cdots y_p\in Pin(m),$ we have $\tilde{a}=y_p\cdots y_1$ and observe that $ax\tilde{a}=O_a(x)$ for some $O_a\in O(m)$. Choosing $y_1,\ \dots,\ y_p$ arbitrarily in $\mathbb{S}^{m-1}$, we see that the group homomorphism
\begin{eqnarray}
\theta:\ Pin(m)\longrightarrow O(m)\ :\ a\mapsto O_a,
\end{eqnarray}
with $a=y_1\cdots y_p$ and $O_ax=ax\tilde{a}$ is surjective. Further $-ax(-\tilde{a})=ax\tilde{a}$, so $1,\ -1\in Ker(\theta)$. In fact $Ker(\theta)=\{1,\ -1\}$. See \cite{P1}. The Spin group is defined as
$$Spin(m)=\{a\in \mathcal{C}l_m: a=y_1y_2\dots y_{2p},y_1,\dots,y_{2p}\in\mathbb{S}^{m-1},\ p\in\mathbb{N}\}$$
 and it is a subgroup of $Pin(m)$. There is a group homomorphism
\begin{eqnarray*}
\theta:\ Spin(m)\longrightarrow SO(m)\ ,
\end{eqnarray*}
which is surjective with kernel $\{1,\ -1\}$. It is defined by $(1)$. Thus $Spin(m)$ is the double cover of $SO(m)$. See \cite{P1} for more details.\\
\par
For a domain $U$ in $\Rm$, a diffeomorphism $\phi: U\longrightarrow \mathbb{R}^m$ is said to be conformal if, for each $x\in U$ and each $\bold{u,v}\in TU_x$, the angle between $\bold{u}$ and $\bold{v}$ is preserved under the corresponding differential at x, $d\phi_x$.
For $m\geq 3$, a theorem of Liouville tells us the only conformal transformations are M\"obius transformations. Ahlfors and Vahlen show that given a M\"{o}bius transformation on $\mathbb{R}^m \cup \{\infty\}$ it can be expressed as $y=(ax+b)(cx+d)^{-1}$ where $a,\ b,\ c,\ d\in \Clm$ and satisfy the following conditions \cite{L1}:
\begin{eqnarray*}
&&1.\ a,\ b,\ c,\ d\ are\ all\ products\ of\ vectors\ in\ \mathbb{R}^m;\\
&&2.\ a\tilde{b},\ c\tilde{d},\ \tilde{b}c,\ \tilde{d}a\in\mathbb{R}^m;\\
&&3.\ a\tilde{d}-b\tilde{c}=\pm 1.
\end{eqnarray*}
Since $y=(ax+b)(cx+d)^{-1}=ac^{-1}+(b-ac^{-1}d)(cx+d)^{-1}$, a conformal transformation can be decomposed as compositions of translation, dilation, reflection and inversion. This gives an \emph{Iwasawa decomposition} for M\"obius transformations. See \cite{Li} for more details. In Section 3, we will show that the Rarita-Schwinger operator is conformally invariant.\\
\par
The Dirac operator in $\mathbb{R}^m$ is defined to be $$D_x:=\displaystyle\sum_{i=1}^{m}e_i\partial_{x_i}.$$ 
 We also let $D$ denote the Dirac operator if there is no confusion in which variable it is with respect to. Note $D_x^2=-\Delta_x$, where $\Delta_x$ is the Laplacian in $\mathbb{R}^m$.  A $\Clm$-valued function $f(x)$ defined on a domain $U$ in $\Rm$ is called left monogenic if $D_xf(x)=0.$ Since multiplication of Clifford numbers is not commutative, there is a similar definition for right monogenic functions.\\
\par
Let $\mathcal{M}_k$ denote the space of $\mathcal{C}l_m$-valued monogenic polynomials, homogeneous of degree $k$. Note that if $h_k\in\Hk$, the space of $\mathcal{C}l_m$-valued harmonic polynomials homogeneous of degree $k$, then $Dh_k\in\mathcal{M}_{k-1}$, but $Dup_{k-1}(u)=(-m-2k+2)p_{k-1}u,$ so
$$\mathcal{H}_k=\mathcal{M}_k\oplus u\mathcal{M}_{k-1},\ h_k=p_k+up_{k-1}.$$
This is an \emph{Almansi-Fischer decomposition} of $\Hk$. See \cite{D} for more details. Similarly, we can obtain by conjugation a right Almansi-Fischer decomposition,
$$\mathcal{H}_k=\overline{\mathcal{M}}_k\oplus \overline{\mathcal{M}}_{k-1}u,$$
where $\overline{\mathcal{M}}_k$ stands for the space of right monogenic polynomials homogeneous of degree $k$.\\
\par
In this Almansi-Fischer decomposition, we define $P_k$ as the projection map 
\begin{eqnarray*}
P_k: \mathcal{H}_k\longrightarrow \mathcal{M}_k.
\end{eqnarray*}
Suppose $U$ is a domain in $\mathbb{R}^m$. Consider $f: U\times \mathbb{R}^m\longrightarrow \mathcal{C}l_m,$
such that for each $x\in U$, $f(x,u)$ is a left monogenic polynomial homogeneous of degree $k$ in $u$, then the Rarita-Schwinger operator is defined as follows
 $$R_k:=P_kD_xf(x,u)=(\frac{uD_u}{m+2k-2}+1)D_xf(x,u).$$
We also have a right projection $P_{k,r}:\ \Hk\longrightarrow\overline{\mathcal{M}}_k$, and a right Rarita-Schwinger operator $R_{k,r}=D_xP_{k,r}.$ See \cite{B,D}.

\subsection{Irreducible representations of the Spin group}\hspace*{\fill} \\
\par
To motivate the Rarita-Schwinger operators and to be  relatively self-contained we cover in the rest of Section 2 some basics on representation theory.
\begin{definition}
A Lie group is a smooth manifold $G$ which is also a group such that multiplication $(g,h)\mapsto gh\ :\ G\times G\longrightarrow G$ and inversion $g\mapsto g^{-1}\ :\ G\longrightarrow G$ are both smooth.
\end{definition}
Let $G$ be a Lie group and $V$ a vector space over $\mathbb{F}$, where $\mathbb{F}=\mathbb{R}$ or $\mathbb{C}$. A $\emph{representation}$ of $G$ is a pair $ (V,\tau)$ in which $\tau$ is a homomorphism from $G$ into the group $Aut(V)$ of invertible $\mathbb{F}$-linear transformations on $V$. Thus $\tau(g)$ and its inverse $\tau(g)^{-1}$ are both $\mathbb{F}$-linear operators on $V$ such that
$$\tau(g_1g_2)=\tau(g_1)\tau(g_2),\ \ \ \ \tau(g^{-1})=\tau(g)^{-1}$$
for all $g_1,\ g_2$ and $g$ in $G$. In practice, it will often be convenient to think and speak of $V$ as simply a $\emph{G-module}$. A subspace $U$ in $V$ which is \emph{G-invariant} in the sense that $gu\in U$ for all $g\in G$ and $u\in U$, is called a \emph{submodule} of $V$ or a \emph{subrepresentation}. The dimension of $V$ is called the dimension of the representation. If $V$ is finite-dimensional it is said to be \emph{irreducible} when it contains no submodules other than $0$ and itself; otherwise, it is said to be \emph{reducible}. The following three representation spaces of the Spin group are frequently used in Clifford analysis.

\subsubsection{Spinor representation space $\mathcal{S}$}
The most commonly used representation of the Spin group in $\mathcal{C}l_m(\C)$ valued function theory is the spinor space. The construction is as follows:\\
Let us consider complex Clifford algebra $\mathcal{C}l_m(\mathbb{C})$ with even dimension $m=2n$. $\C^m$ or the space of vectors is embedded in $\Clm(\C)$ as
\begin{eqnarray*}
(x_1,x_2,\cdots,x_m)\mapsto \sum^{m}_{j=1}x_je_j:\ \C ^m\hookrightarrow \mathcal{C}l_m(\C).
\end{eqnarray*}
Define the \emph{Witt basis} elements of $\C^{2n}$ as 
$$f_j:=\displaystyle\frac{e_j-ie_{j+n}}{2},\ \ f_j^{\dagger}:=-\displaystyle\frac{e_j+ie_{j+n}}{2}.$$
Let $I:=f_1f_1^{\dagger}\dots f_nf_n^{\dagger}$. The space of \emph{Dirac spinors} is defined as
$$\mathcal{S}:=\Clm(\C)I.$$ This is a representation of $Spin(m)$ under the following action
$$\rho(s)I:=sI,\ for\ s\in Spin(m).$$
Note that $\scs$ is a left ideal of $\Clm (\C)$. For more details, we refer the reader to \cite{De}. An alternative construction of spinor spaces is given in the classical paper of Atiyah, Bott and Shapiro \cite{At}.

\subsubsection{Homogeneous harmonic polynomials on $\mathcal{H}_k(\Rm,\mathbb{C})$}\hspace*{\fill} \\
\par

It is a well-known fact that the space of harmonic polynomials is invariant under the action of $Spin(m)$, since the Laplacian $\Delta_m$ is an $SO(m)$ invariant operator. But it is not irreducible for $Spin(m)$. It can be decomposed into the infinite sum of $k$-homogeneous harmonic polynomials, $1<k<\infty$. Each of these spaces is irreducible for $Spin(m)$. This brings us the most familiar representations of $Spin(m)$: spaces of $k$-homogeneous harmonic polynomials on $\mathbb{R}^m$. The following action has been shown to be an irreducible representation of $Spin(m)$ (see \cite{L}): 
\begin{eqnarray*}
\rho\ :\ Spin(m)\longrightarrow Aut(\Hk),\ s\longmapsto \big(f(x)\mapsto \tilde{s}f(sx\tilde{s})s\big).
\end{eqnarray*}
This can also be realized as follows
\begin{eqnarray*}
Spin(m)\xlongrightarrow{\theta}SO(m)\xlongrightarrow{\rho} Aut(\Hk);\\
a\longmapsto O_a\longmapsto \big(f(x)\mapsto f(O_ax)\big),
\end{eqnarray*}
where $\theta$ is the double covering map and $\rho$ is the standard action of $SO(m)$ on a function $f(x)\in\Hk$ with $x\in\mathbb{R}^m$.

\subsubsection{Homogeneous monogenic polynomials on $\mathcal{C}l_m$}\hspace*{\fill} \\
\par
In $\mathcal{C}l_m$-valued function theory, the previously mentioned Almansi-Fischer decomposition shows us we can also decompose the space of $k$-homogeneous harmonic polynomials as follows
$$\Hk=\Mk\oplus u\Mkk.$$
If we restrict $\Mk$ to the spinor valued subspace, we have another important representation of $Spin(m)$: the space of $k$-homogeneous spinor-valued monogenic polynomials on $\Rm$, henceforth denoted by $\Mk:=\Mk(\Rm,\mathcal{S})$. More specifically, the following action has been shown as an irreducible representation of $Spin(m)$:
\begin{eqnarray*}
\pi\ :\ Spin(m)\longrightarrow Aut(\Mk),\ s\longmapsto f(x)\mapsto \tilde{s}f(sx\tilde{s}).
\end{eqnarray*}
For more details, we refer the reader to \cite{M}.

\subsubsection{Stein-Weiss operators}\hspace*{\fill} \\
\par
Let $U$ and $V$ be $m$-dimensional inner product vector spaces over a field $\mathbb{F}$. Denote the groups of all automorphism of $U$ and $V$ by $GL(U)$ and $GL(V)$, respectively. Suppose $\rho_1:\ G\longrightarrow GL(U)$ and $\rho_2:\ G\longrightarrow GL(V)$ are irreducible representations of a compact Lie group $G$. We have a function $f:\ U\longrightarrow V$ which has continuous derivative. Taking the gradient of the function $f(x)$, we have
$$\nabla f\in Hom(U,V)\cong U^*\otimes V\cong U\otimes V,\
 where\ \nabla:=(\partial_{x_1},\cdots,\partial_{x_m}).$$
Denote by $U[\times]V$ the irreducible representation of $U\otimes V$ whose representation space has largest dimension \cite{H}. This is known as the Cartan product of $\rho_1$ and $\rho_2$ \cite{Ea}. Using the inner products on $U$ and $V$, we may write
$$U\otimes V=(U[\times]V)\oplus(U[\times]V)^{\perp}$$
If we denote by $E$ and $E^{\perp}$ the orthogonal projections onto $U[\times]V$ and $(U[\times]V)^{\perp}$, respectively, then we define differential operators $D$ and $D^{\perp}$ associated to $\rho_1$ and $\rho_2$ by
$$D=E\nabla;\ D^{\perp}=E^{\perp}\nabla.$$
These are called \emph{Stein-Weiss type operators} after \cite{ES}. The importance of this construction is that you can reconstruct many first order differential operators with it when you choose proper representation spaces $U$ and $V$ for a Lie group $G$. For instance,  Euclidean Dirac operators \cite{Sh,ES} and Rarita-Schwinger operators \cite{G}. The connections are as follows:\\
\par
\emph{1. Dirac operators}
\hspace*{\fill} \\
\par
Here we only show the odd dimension case. Similar arguments also apply in the even dimensional case.
\begin{theorem}
Let $\rho_1$ be the representation of the spin group given by the standard representation of $SO(m)$ on $\Rm$
$$\rho_1:\ Spin(m)\longrightarrow SO(m)\longrightarrow GL(\Rm)$$
and let $\rho_2$ be the spin representation on the spinor space $\mathcal{S}$. Then the Euclidean Dirac operator is the differential operator given by $\Rm[\times]\mathcal{S}$ when $m=2n+1$.
\end{theorem}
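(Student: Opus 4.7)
The plan is to exhibit the decomposition of $\Rm\otimes\scs$ into $Spin(m)$-irreducibles and identify the resulting Stein-Weiss projections with the known first-order operators in Clifford analysis. First, given an $\scs$-valued $C^{1}$ function $f$ on $\Rm$, the gradient $\nabla f=(\partial_{x_{1}}f,\ldots,\partial_{x_{m}}f)$ naturally lies in $\mathrm{Hom}(\Rm,\scs)\cong \Rm^{\ast}\otimes \scs\cong \Rm\otimes \scs$, where we use the Euclidean inner product to identify $\Rm^{\ast}$ with $\Rm$. Thus the Stein-Weiss machinery applies with $\rho_{1}$ the standard representation and $\rho_{2}$ the spin representation, and we only need to decompose $\Rm\otimes\scs$ and identify the orthogonal projections.

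Next, I would invoke the Clebsch-Gordan-type decomposition for $Spin(m)$ with $m=2n+1$. Since $\Rm$ has highest weight $(1,0,\ldots,0)$ and $\scs$ has highest weight $(\tfrac12,\ldots,\tfrac12)$, a direct character computation (or Pieri's rule for $B_{n}$) gives
$$\Rm\otimes\scs\;\cong\;\bigl(\Rm[\times]\scs\bigr)\oplus \scs,$$
where the Cartan product $\Rm[\times]\scs$ is the irreducible of highest weight $(\tfrac32,\tfrac12,\ldots,\tfrac12)$ and dimension $2n\cdot 2^{n}$, while the remaining copy of $\scs$ enters as the \emph{residual} spinor component. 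Crucially, in odd dimension only these two irreducibles appear, which is what makes the identification of the Dirac operator unambiguous.

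The heart of the argument is to realize the Stein-Weiss projection onto the $\scs$-summand concretely. Define $\mu:\Rm\otimes\scs\longrightarrow\scs$ by $\mu(v\otimes s)=v\cdot s$, where the dot denotes Clifford multiplication. Since $Spin(m)$ acts on $\Rm$ by $x\mapsto sx\tilde{s}$ and on $\scs$ by left multiplication, and Clifford multiplication intertwines these actions, $\mu$ is $Spin(m)$-equivariant; Schur's lemma then forces $\mu$, up to a nonzero scalar, to coincide with the orthogonal projection $E^{\perp}$ onto the $\scs$-summand, and the Cartan component is recovered as $\Rm[\times]\scs=\Ker\mu$. Applying this projection to $\nabla f=\sum_{i}e_{i}\otimes\partial_{x_{i}}f$ yields
$$E^{\perp}(\nabla f)\;=\;\sum_{i=1}^{m}e_{i}\cdot\partial_{x_{i}}f\;=\;D_{x}f,$$
which is precisely the Euclidean Dirac operator, so $D_{x}$ arises from the Stein-Weiss construction attached to the decomposition determined by $\Rm[\times]\scs$.

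The main obstacle is establishing the two-summand decomposition $\Rm\otimes\scs=(\Rm[\times]\scs)\oplus \scs$ with the correct multiplicities; this is dimension-sensitive and benefits from either the Weyl dimension formula together with a branching calculation for $Spin(2n+1)$, or an invariant-theoretic argument showing that Clifford multiplication is surjective onto $\scs$ and that its kernel is irreducible. A secondary technicality is fixing the normalization of $\mu$ so that it equals the orthogonal projection $E^{\perp}$ in the chosen inner product; this scalar is inessential for the identification of the operator but matters if one wants the equality $E^{\perp}\nabla=D_{x}$ on the nose. In even dimensions, the analogous argument is complicated by the splitting $\scs=\scs^{+}\oplus\scs^{-}$, which is precisely why the statement is restricted here to $m=2n+1$.
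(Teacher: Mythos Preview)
Your argument is correct and reaches the same conclusion as the paper, but the mechanism you use to pin down the $\scs$-component of $\Rm\otimes\scs$ is dual to the paper's. The paper works with the explicit \emph{embedding}
\[
\eta:\scs\hookrightarrow\Rm\otimes\scs,\qquad \omega\mapsto\tfrac{1}{\sqrt{m}}(e_1\omega,\ldots,e_m\omega),
\]
cites Stein--Weiss for the fact that this is a $Spin(m)$-isomorphism onto a subrepresentation, and then shows by a direct inner-product computation that $E^{\perp}\nabla f=0$ is equivalent to $\nabla f\perp\eta(\scs)$, which unwinds to $\sum_i e_i\partial_{x_i}f=0$. You instead use the explicit \emph{retraction} $\mu(v\otimes s)=v\cdot s$ and invoke Schur's lemma (together with a Pieri/Clebsch--Gordan argument guaranteeing multiplicity one) to identify $\mu$ with $E^{\perp}$ up to scalar, so that $E^{\perp}\nabla f=D_xf$ directly. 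The two approaches are equivalent---$\eta$ and $\mu$ are adjoint up to a constant---but yours leans more on representation theory (the two-summand decomposition and Schur) while the paper's is more hands-on via the explicit section $\eta$ and avoids any appeal to branching rules. Your version gives the operator identity on the nose (modulo the scalar you flag), whereas the paper only matches kernels; conversely, the paper's route sidesteps the need to justify the irreducible decomposition independently.
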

\emph{Outline proof:} Let $\{e_1,\cdots,e_m\}$ be the orthonormal basis of $\Rm$ and $x=(x_1,\cdots,x_m)\in\Rm$. For a function $f(x)$ having values in $\scs$, we must show that the system
$$\sum_{i=1}^{m}e_i\displaystyle\frac{\partial f}{\partial x_i}=0$$
is equivalent to the system
$$D^{\perp}f=E^{\perp}\nabla f=0.$$
Since we have
$$\Rm\otimes\scs=\Rm[\times]\scs\ \oplus\ (\Rm[\times]\scs)^{\perp}$$
and \cite{ES} provides us an embedding map
\begin{eqnarray*}
&&\eta: \scs\hookrightarrow \Rm\otimes\scs,\\
&& \omega\mapsto \frac{1}{\sqrt{m}}(e_1\omega,\cdots,e_m\omega).
\end{eqnarray*}
Actually, this is an isomorphism from $\scs$ into $\Rm\otimes\scs$. For the proof, we refer the reader to \emph{page 175} of \cite{ES}. Thus, we have
$$\Rm\otimes\scs=\Rm[\times]\scs\ \oplus\ \eta(\scs).$$
Consider the equation $D^{\perp}f=E^{\perp}\nabla f=0$, where $f$ has values in $\scs$. So $\nabla f$ has values in $\Rm\otimes \scs$, and so the condition $D^{\perp}f=0$ is equivalent to $\nabla f$ being orthogonal to $\eta (\scs)$. This is precisely the statement that
$$\sum_{i=1}^{m}(\frac{\partial f}{\partial x_i},e_i\omega)=0,\ \forall\omega\in\scs.$$
Notice, however, that as an endomorphism of $\Rm\otimes\scs$, we have $-e_i$ as the dual of $e_i$, hence the equation above becomes
$$\sum_{i=1}^m(e_i\frac{\partial f}{\partial x_i},\omega)=0,\ \forall\omega\in\scs,$$
which says precisely that $f$ must be in the kernel of the Euclidean Dirac operator. This completes the proof.\qquad \qquad \qquad \qquad \qquad \qquad \qquad \qquad \qquad \qquad\qquad\qquad\qquad \qquad\quad \qedsymbol\\
\par
\emph{2. Rarita-Schwinger operators}
\hspace*{\fill} \\
\par
\begin{theorem}
Let $\rho_1$ be defined as above and $\rho_2$ is the representation of $Spin(m)$ on $\Mk$. Then as a representation of $Spin(m)$, we have the following decomposition
\begin{eqnarray*}
\Mk\otimes \Rm\cong \Mk[\times]\Rm\oplus\Mk\oplus\Mkk\oplus\mathcal{M}_{k,1},
\end{eqnarray*}
where $\mathcal{M}_{k,1}$ is a simplicial monogenic polynomial space as a $Spin(m)$ representation (see more details in \cite{Bie}). The Rarita-Schwinger operator is the differential operator given by projecting the gradient onto the $\Mk$ component.
\end{theorem}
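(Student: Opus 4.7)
The plan is to mimic the outline proof given for the Dirac operator, adapting it to the representation $\Mk$ in place of $\scs$. Because $\Mk$ is a larger irreducible than $\scs$, the decomposition of $\Mk\otimes\Rm$ has four pieces rather than two, so the main new content is identifying that decomposition and then picking out which piece the Rarita-Schwinger operator corresponds to.

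First I would establish the $Spin(m)$-module decomposition
\begin{eqnarray*}
\Mk\otimes\Rm\ \cong\ \Mk[\times]\Rm\ \oplus\ \Mk\ \oplus\ \Mkk\ \oplus\ \mathcal{M}_{k,1}.
\end{eqnarray*}
This is a standard tensor-with-standard branching computation, which I would verify either via highest weights and characters, or by constructing four explicit $Spin(m)$-equivariant maps into $\Mk\otimes\Rm$ whose image dimensions sum to $m\cdot\dim\Mk$. The $\Mk$ and $\Mkk$ summands arise naturally by applying the Almansi-Fischer decomposition fibrewise to the Clifford-multiplication map $p(u)\mapsto \bigl(e_1p(u),\dots,e_mp(u)\bigr)$: each component $e_ip(u)$ is a Clifford-valued polynomial homogeneous of degree $k$ in $u$, harmonic since $\Delta_u e_ip=e_i\Delta_u p=0$, hence splits as $P_k(e_ip)+u\,q_i$ with $q_i\in\Mkk$. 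The Cartan summand $\Mk[\times]\Rm$ and the simplicial piece $\mathcal{M}_{k,1}$ then account for the remainder.

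Next, in analogy with $\eta:\scs\hookrightarrow\Rm\otimes\scs$ in the Dirac case, I would exhibit an explicit $Spin(m)$-equivariant embedding
\begin{eqnarray*}
\eta:\ \Mk\ \hookrightarrow\ \Mk\otimes\Rm,\qquad p(u)\longmapsto \bigl(P_k(e_1p),\dots,P_k(e_mp)\bigr),
\end{eqnarray*}
whose image is precisely the second $\Mk$ summand. Equivariance follows because $P_k$ and left Clifford multiplication by $e_i$ both intertwine the $Spin(m)$ action up to the usual twist; injectivity and the normalization constant are fixed by requiring $\eta$ to be an isometry for the Fischer inner product. Then, for $f:U\to\Mk$, the component of $\nabla f$ lying in $\eta(\Mk)$ is computed by pairing with $\eta(p)$ for arbitrary $p\in\Mk$. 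Using the standard fact that $e_i^{*}=-e_i$ as an endomorphism (exactly as in the Dirac outline), the pairing collapses to
\begin{eqnarray*}
\sum_{i=1}^m\bigl\langle\partial_{x_i}f,\,P_k(e_ip)\bigr\rangle\ =\ \Bigl\langle P_k\sum_{i=1}^m e_i\partial_{x_i}f,\,p\Bigr\rangle\ =\ \bigl\langle P_kD_xf,\,p\bigr\rangle,
\end{eqnarray*}
which identifies the $\Mk$-component of $\nabla f$ with $P_kD_xf=\bigl(\tfrac{uD_u}{m+2k-2}+1\bigr)D_xf=R_kf$.

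The hard part will be step one: nailing down the precise four-term decomposition and, in particular, correctly isolating the simplicial summand $\mathcal{M}_{k,1}$, since it has no analogue in the Dirac case. Once the decomposition and the embedding $\eta$ are in place, step two (equivariance and normalization of $\eta$) and step three (the Stein-Weiss projection identity) are essentially routine adaptations of the Dirac argument, with $P_k$ inserted wherever Clifford multiplication would otherwise leave $\Mk$.
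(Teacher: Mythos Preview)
Your plan matches the paper's argument in spirit and in its three-step structure (decompose $\Mk\otimes\Rm$, embed $\Mk$ explicitly, pair $\nabla f$ against the embedding using $e_i^{*}=-e_i$). The paper likewise cites Stein--Weiss for the four-term decomposition rather than deriving it from scratch, so your worry about ``nailing down'' $\mathcal{M}_{k,1}$ is handled by reference there too.

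The one substantive difference is the choice of embedding. You take $\eta(p)=\bigl(P_k(e_1p),\dots,P_k(e_mp)\bigr)$ via \emph{left} Clifford multiplication followed by $P_k$; the paper instead uses \emph{right} multiplication,
\[
\theta:\ q_k(u)\longmapsto \bigl(q_k(u)e_1,\dots,q_k(u)e_m\bigr),
\]
which already lands in $\Mk\otimes\Rm$ because $D_u(q_ke_i)=(D_uq_k)e_i=0$. This buys a cleaner endgame: pairing $\nabla f$ against $\theta(q_k)$ and using $e_i^{*}=-e_i$ gives directly $(q_k,D_xf)_u=0$, and then the $P_k$ appears not by adjointness but by writing $D_xf=f_1+uf_2$ via Almansi--Fischer and killing $(q_k,uf_2)_u$ with the Clifford--Cauchy theorem. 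Your route works too (since $\partial_{x_i}f\in\Mk$ you can drop the $P_k$ on one side, and since $p\in\Mk$ the pairing with $D_xf$ only sees $P_kD_xf$), but the paper's embedding avoids having to shuffle the projection through the inner product and makes the $Spin(m)$-equivariance of the embedding immediate.
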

\begin{proof}
Consider $f(x,u)\in C^{\infty}(\Rm,\mathcal{M}_k)$. We observe that the gradient of $f(x,u)$ satisfies
$$\nabla f(x,u)=(\partial_{x_1},\cdots,\partial_{x_m})f(x,u)=(\partial_{x_1}f(x,u),\cdots,\partial_{x_m}f(x,u))\in \mathcal{M}_k\otimes \Rm.$$
A similar argument as in \emph {page 181} of \cite{ES} shows 
$$\mathcal{M}_k\otimes \Rm=\Mk[\times]\Rm\oplus V_1\oplus V_2 \oplus V_3,$$
where $V_1\cong \Mk$, $V_2\cong \Mkk$ and $V_3\cong \mathcal{M}_{k,1}$ as $Spin(m)$ representations. Similar arguments as on \emph{page 175} of \cite{ES} show
\begin{eqnarray*}
\theta:\ \Mk\longrightarrow \Mk\otimes\Rm,\text{  }q_k(u)\mapsto (q_k(u)e_1,\cdots,q_k(u)e_m)
\end{eqnarray*} 
is an isomorphism from $\Mk$ into $\Mk\otimes\Rm$. Hence, we have
\begin{eqnarray*}
\mathcal{M}_k \otimes \Rm=\Mk[\times]\Rm\oplus\ \theta(\Mk)\oplus V_2\oplus V_3.
\end{eqnarray*}
Let $P'_k$ be the projection map from $\Mk \otimes \Rm$ to $\theta(\Mk)$. Consider the equation $P'_k\nabla f(x,u)=0$ for $f(x,u)\in C^{\infty}(\Rm,\Mk)$. Then, for each fixed $x$, $\nabla f(x,u)\in\Mk\otimes\Rm$ and the condition $P'_k\nabla f(x,u)=0$ is equivalent to $\nabla f$ being orthogonal to $\theta(\Mk)$. This says precisely
$$\sum_{i=1}^{m}(q_k(u)e_i,\partial_{x_i}f(x,u))_u=0,\ \forall q_k(u)\in\Mk,$$
where $(p(u),q(u))_u=\displaystyle\int_{\Sm}\overline{p(u)}q(u)dS(u)$ is the Fischer inner product for any pair of $\Clm$-valued polynomials. Since $-e_i$ is the dual of $e_i$ as an endomorphism of $\Mk\otimes\Rm$, the previous equation becomes
\begin{eqnarray*}
\sum_{i=1}^m(q_k(u),e_i\partial_{x_i}f(x,u))=(q_k(u),D_xf(x,u))_u=0.
\end{eqnarray*}
Since $f(x,u)\in\Mk$ for fixed $x$, then $D_xf(x,u)\in\Hk$. According to the Almansi-Fischer decomposition, we have
$$D_xf(x,u)=f_1(x,u)+uf_2(x,u), \text{  } f_1(x,u)\in\Mk \text{ and } f_2(x,u)\in\Mkk.$$
We then obtain $(q_k(u),f_1(x,u))_u+(q_k(u),uf_2(x,u))_u=0.$
However, the Clifford-Cauchy theorem \cite{D} shows
$(q_k(u),uf_2(x,u))_u=0.$
Thus, the equation $P'_k\nabla f(x,u)=0$ is equivalent  to $$(q_k(u),f_1(x,u))_u=0,\ \forall q_k(u)\in\Mk.$$
Hence, $f_1(x,u)=0$. We also know, from the construction of the Rarita-Schwinger operator, that $f_1(x,u)=R_kf(x,u)$. Therefore, the Stein-Weiss type operator $P'_k\nabla$ is precisely the Rarita-Schwinger operator in this context.
\end{proof}

\section{Properties of the Rarita-Schwinger operator}
\subsection{A counterexample}\hspace*{\fill} \\
\par
We know that the Dirac operator $D_x$ is conformally invariant in $\mathcal{C}l_m$-valued function theory \cite{R1}. But in the Rarita-Schwinger setting, $D_x$ is not conformally invariant anymore. In other words, in $\mathcal{C}l_m$-valued function theory, the Dirac operator $D_x$ has the following conformal invariance property under inversion: If $D_xf(x)=0$, $f(x)$ is a $\Clm$-valued function and $x=y^{-1}$, $x\in \mathbb{R}^m$, then
$D_y\displaystyle\frac{y}{||y||^m}f(y^{-1})=0$.
In the Rarita-Schwinger setting, if $D_xf(x,u)=D_uf(x,u)=0$, $f(x,u)$ is a polynomial for any fixed $x\in \mathbb {R}^m$ and let $x=y^{-1},\ u=\displaystyle\frac{ywy}{||y||^2}$, $x\in \mathbb{R}^m$, then
$D_y\displaystyle\frac{y}{||y||^m}f(y^{-1},\displaystyle\frac{ywy}{||y||^2})\neq0$ in general.\\

A quick way to see this is to choose the function $f(x,u)=u_1e_1-u_2e_2$, and use $u=\displaystyle\frac{ywy}{||y||^2}=w-2\displaystyle\frac{y}{||y||^2}\langle w,y\rangle $, $u_i=w_i-2\displaystyle\frac{y_i}{||y||^2}\langle w,y\rangle$, where $i=1,2,\dots, m$. A straightforward calculation shows that
\begin{eqnarray*}
D_y\displaystyle\frac{y}{||y||^m}f(y^{-1},\frac{ywy}{||y||^2})=\frac{-2wy(y_1e_1-y_2e_2)}{||y||^{m+2}}\neq 0,
\end{eqnarray*}
for $m>2$. However, $P_1D_y\displaystyle\frac{y}{||y||^m}f(y^{-1},\frac{ywy}{||y||^2})=\big(\frac{wD_w}{m}+1\big)w\frac{-2y(y_1e_1-y_2e_2)}{||y||^{m+2}}=0.$


\subsection{Conformal Invariance}\hspace*{\fill} \\
\par
In \cite{D}, the conformal invariance of the equation $R_kf=0$ is proved and some other properties under the assumption that $D_x$ is still conformally invariant in the Rarita-Schwinger setting. This is incorrect as we just showed. In this section, we will use the Iwasawa decomposition of M\"{o}bius transformations and some integral formulas to correct this. As observed earlier, according to this Iwasawa decomposition, a conformal transformation is a composition of translation, dilation, reflection and inversion. A simple observation shows that the  Rarita-Schwinger operator is conformally invariant under translation and dilation and the conformal invariance under reflection can be found in \cite{L}. Hence, we only show it is conformally invariant under inversion here.
\begin{theorem}\label{theorem Rk}
For any fixed $x\in U\subset\mathbb{R}^m$, let $f(x,u)$ be a left monogenic polynomial homogeneous of degree $k$ in $u$. If $R_{k,u}f(x,u)=0$, then $R_{k,w}G(y)f(y^{-1},\displaystyle\frac{ywy}{||y||^2})=0$, where $G(y)=\displaystyle\frac{y}{||y||^m},\ x=y^{-1},\ u=\displaystyle\frac{ywy}{||y||^2}\in \mathbb{R}^m$.
\end{theorem}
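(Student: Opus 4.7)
By the Iwasawa decomposition from Section~2, every Möbius transformation factors as a composition of translations, dilations, reflections, and the Kelvin inversion $y\mapsto y^{-1}$. Translations and dilations commute trivially with $R_k$, and reflection-invariance is recorded in \cite{L}, so the whole claim reduces to invariance under inversion, which is precisely the statement of the theorem. Recalling that $R_k=P_kD_x$ with $P_k=1+uD_u/(m+2k-2)$ the projection onto $\Mk$ in the Almansi--Fischer decomposition $\Hk=\Mk\oplus u\Mkk$, and setting $F(y,w):=G(y)f(y^{-1},u(y,w))$ with $u(y,w):=ywy/\|y\|^2$, the task is to show that $D_yF(y,w)$ lies in $w\Mkk(w)$ for each fixed $y$; the projection $P_{k,w}$ then annihilates it and yields $R_{k,w}F=0$.

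\textbf{Key steps.} Split $D_yF=\Sigma_1+\Sigma_2$ by applying the product and chain rules to $G(y)f(y^{-1},u(y,w))$, where $\Sigma_1$ collects the terms in which $\partial_{y_i}$ acts on $G(y)$ or on the first slot $y^{-1}$ of $f$, and $\Sigma_2$ collects the terms in which $\partial_{y_i}$ acts through the second slot $u(y,w)$. For $\Sigma_1$, use the standard Kelvin intertwining of the Dirac operator in the $x$-variable to rewrite $\Sigma_1$ as a Clifford-sandwiched expression of the schematic form $\|y\|^{-m-2}\,y\,(D_xf)(y^{-1},u(y,w))\,(\cdots)$. By the hypothesis $R_{k,u}f=0$ and the Almansi--Fischer decomposition, $(D_xf)(x,u)=u\,q(x,u)$ with $q(x,\cdot)\in\Mkk(u)$; substituting $u=ywy/\|y\|^2$ and using $y^2=-\|y\|^2$ gives $y\cdot u(y,w)=-wy$, which pulls out a common left factor of $w$, so $\Sigma_1\in w\Mkk(w)$ after noting that the residual tail $y\,q(y^{-1},u(y,w))(\cdots)$ is monogenic of degree $k-1$ in $w$ by the reflection-intertwining of $D_u$. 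For $\Sigma_2$, differentiate $u(y,w)=ywy/\|y\|^2$ explicitly in $y$ to obtain a sum of Clifford products involving $e_i$, $y$, and $w$; multiplying by $e_i$, summing over $i$, and using $D_uf=0$ to kill the symmetric combinations, the surviving terms likewise collect a common left factor of $w$ and hence land in $w\Mkk(w)$.

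\textbf{Main obstacle.} The technical heart of the proof is the algebraic bookkeeping needed to exhibit the common left factor of $w$ in both $\Sigma_1$ and $\Sigma_2$; this rests on repeated use of the identities $y^2=-\|y\|^2$, $e_ie_j+e_je_i=-2\delta_{ij}$, and $(ywy)^2=\|y\|^4\|w\|^2$, together with the monogenicity relations $D_uf=0$ and $D_xf=u\,q$. The counterexample in Section~3.1 provides a concrete sanity check: with $f(x,u)=u_1e_1-u_2e_2$ one has $D_xf\equiv 0$, so $\Sigma_1\equiv 0$ and $D_yF=\Sigma_2=-2wy(y_1e_1-y_2e_2)/\|y\|^{m+2}$, which visibly factors as $w$ times an element of $\mathcal{M}_0$ and is annihilated by $P_{1,w}$, matching the structural prediction of the plan. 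Once the analogous bookkeeping is carried out for general $k$ and general $f$, the invariance follows.
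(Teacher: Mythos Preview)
Your route is a direct pointwise computation and is genuinely different from the paper's. The paper never differentiates $G(y)f(y^{-1},ywy/\|y\|^2)$ explicitly; it argues by duality. Starting from Cauchy's theorem for $R_k$ with test functions $g\in\ker R_{k,r}$ and $f\in\ker R_k$, it performs the change of variable $x=y^{-1}$ in the boundary integral via Proposition~\ref{prop 1}, uses that $P_{k,u}$ commutes with left multiplication by $G(y)$, substitutes $u=ywy/\|y\|^2$, and then applies Stokes' theorem for $R_k$ (Theorem~\ref{StokesRk}) in the $y$-variable to turn the boundary integral back into a volume integral involving $R_{k,w}$ of the transformed functions. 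Arbitrariness of $f$, $g$ and of the domain then forces the vanishing. Your computational approach, if completed, would give something sharper---an explicit description of which element of $w\Mkk$ the quantity $D_yF$ equals---whereas the paper's argument is shorter and reuses the $R_k$-Stokes machinery already established.

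That said, your proposal is incomplete precisely at its hardest step. The $\Sigma_1$ analysis is sound: freezing $u$ allows the classical scalar Kelvin intertwiner $D_y[G(y)h(y^{-1})]=\|y\|^{-m-2}\,y\,(D_xh)(y^{-1})$ to be applied, and then $D_xf=u\,q$ together with $y\cdot u(y,w)=-wy$ extracts the left factor $w$, while the reflection covariance $D_w[\hat y\,q(\hat y w\hat y)]=-\hat y\,(D_uq)(\hat y w\hat y)=0$ confirms the tail lies in $\Mkk(w)$. But for $\Sigma_2$ you only assert that the chain-rule terms ``likewise collect a common left factor of $w$'' and then defer the general case. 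The one sanity check you perform ($k=1$, $f$ independent of $x$) is the easiest possible instance. For general $k$ one must feed $\partial_{y_i}u(y,w)=\|y\|^{-2}(e_iwy+ywe_i-2y_i u)$ into $\nabla_uf$, multiply on the left by $e_iG(y)$, and sum; showing that the result lies in $w\Mkk(w)$ genuinely requires both the Euler relation and $D_uf=0$ in a coordinated way that you have not exhibited. Until that algebra is carried out, the proposal remains an outline; the paper's integral argument is designed exactly to bypass this computation.
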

To establish the conformal invariance of $R_k$, we need $Stokes'\ Theorem$ for $R_k.$
\begin{theorem}[\cite{D}]\textbf{(Stokes' theorem for $R_k)$}\label{StokesRk}\\
Let $\Omega '$ and $\Omega$ be domains in $\mathbb{R}^m$ and suppose the closure of $\Omega$ lies in $\Omega '$. Further suppose the closure of $\Omega$ is compact and $\partial \Omega$ is piecewise smooth. Let $f,g\in C^1(\Omega ',\Mk)$. Then
\begin{eqnarray*}
&&\int_{\Omega}\big[(g(x,u)R_k,f(x,u))_u+(g(x,u),R_kf(x,u))\big]dx^m\\
=&&\int_{\partial\Omega}(g(x,u),P_kd\sigma_xf(x,u))_u\\
=&&\int_{\partial\Omega}(g(x,u)d\sigma_xP_{k,r},f(x,u))_u,
\end{eqnarray*}
where $P_k$ and $P_{k,r}$ are the left and right projections, $d\sigma_x=n(x)d\sigma(x)$, $d\sigma(x)$ is the area element. $(P(u),Q(u))_u=\int_{\mathbb{S}^{m-1}}P(u)Q(u)dS(u)$ is the inner product for any pair of $\Clm$-valued polynomials.
\end{theorem}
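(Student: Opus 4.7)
\emph{Proof plan.} The idea is to reduce the claim to the classical Stokes' theorem for the Dirac operator $D_x$ applied pointwise in the spherical parameter $u\in\Sm$, then integrate the resulting identity over $\Sm$ and use orthogonality of the Almansi-Fischer decomposition to replace $D_x$ by $R_k$ in the volume term and to insert $P_k$ (or $P_{k,r}$) in the boundary term.

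First, for each fixed $u\in\Sm$ the maps $x\mapsto g(x,u)$ and $x\mapsto f(x,u)$ are $C^1$ $\Clm$-valued functions on $\Omega'$, so the classical Stokes' formula for the Dirac operator gives
$$\int_\Omega\big[(g(x,u)D_x)f(x,u)+g(x,u)\,D_xf(x,u)\big]\,dx^m=\int_{\partial\Omega}g(x,u)\,d\sigma_x\,f(x,u)$$
pointwise in $u$. Integrating both sides over $u\in\Sm$ against $dS(u)$ and applying Fubini to interchange the spherical and Euclidean integrations, and recognising the definition of $(\,\cdot\,,\,\cdot\,)_u$, yields
$$\int_\Omega\big[(gD_x,f)_u+(g,D_xf)_u\big]\,dx^m=\int_{\partial\Omega}(g,d\sigma_x f)_u.$$

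Next I convert $D_x$ into $R_k$ on both sides. Fix $x$. Since $f(x,\cdot)\in\Mk$ is homogeneous of degree $k$ and harmonic in $u$, the polynomial $D_xf(x,\cdot)$ is again homogeneous of degree $k$ and harmonic in $u$ (derivation in $x$ commutes with $\Delta_u$), hence lies in $\Hk$ and admits the Almansi-Fischer splitting
$$D_xf(x,u)=R_kf(x,u)+u\,\varphi(x,u),\qquad \varphi(x,\cdot)\in\Mkk.$$
Because $g(x,\cdot)\in\Mk$, the Clifford-Cauchy-type orthogonality $(q_k,u\,q_{k-1})_u=0$ invoked earlier in the excerpt forces $(g,u\varphi)_u=0$, so $(g,D_xf)_u=(g,R_kf)_u$. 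The mirror argument, using the right Almansi-Fischer decomposition $\Hk=\overline{\mathcal{M}}_k\oplus\overline{\mathcal{M}}_{k-1}u$ applied to $gD_x$ (which is harmonic in $u$ since $g$ is), gives $(gD_x,f)_u=(gR_k,f)_u$. The boundary term is handled identically: $d\sigma_x f(x,u)$ is harmonic of degree $k$ in $u$ because $d\sigma_x$ does not involve $u$, so its left Almansi-Fischer decomposition combined with the same orthogonality gives $(g,d\sigma_x f)_u=(g,P_k d\sigma_x f)_u$; the symmetric right-sided decomposition applied to $g\,d\sigma_x$ produces $(g,d\sigma_x f)_u=(g\,d\sigma_x P_{k,r},f)_u$, yielding both boundary equalities.

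The only substantive technical point is the Almansi-Fischer orthogonality $(q_k,u\,q_{k-1})_u=0$ in the un-conjugated spherical inner product together with its right-sided counterpart; because Clifford multiplication is non-commutative, I would have to check that the left and right versions of this identity align in sign and degree so that they cut out exactly the projections $P_k$ and $P_{k,r}$ that appear on the boundary. This is the main hurdle but follows from standard Clifford-Cauchy reasoning on $\Sm$, and everything else reduces to classical Dirac Stokes, Fubini, and linearity.
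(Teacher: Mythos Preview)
The paper does not supply a proof of this theorem; it is quoted from \cite{D} and used as a tool. Your outline is exactly the standard argument found there: apply the classical Stokes identity for $D_x$ at each fixed $u\in\Sm$, integrate over $\Sm$ to pass to $(\cdot,\cdot)_u$, and then use the spherical Clifford--Cauchy theorem to kill the $u\Mkk$ and $\overline{\mathcal M}_{k-1}u$ components, which simultaneously converts $D_x$ to $R_k$ in the volume term and inserts $P_k$ (resp.\ $P_{k,r}$) in the boundary term.

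The one place your write-up is loose is precisely the point you flag at the end, and it is worth tightening rather than deferring. The identity you need on the sphere is
\[
\int_{\Sm} p(u)\,u\,q(u)\,dS(u)=0
\]
for $p$ \emph{right} monogenic and $q$ \emph{left} monogenic (the outward normal on $\Sm$ is $u$). In the theorem as printed, $g(x,\cdot)$ is declared to lie in $\Mk$; but the right action $gR_k$, the right projection $P_{k,r}$ on the boundary, and the use in Corollary~1 (where $g$ is assumed to satisfy the \emph{right} equation $gR_k=0$) all indicate that $g$ should be read as $\overline{\mathcal M}_k$-valued. With $g(x,\cdot)\in\overline{\mathcal M}_k$ the Cauchy step gives $(g,u\varphi)_u=0$ and $(\psi u,f)_u=0$ directly, so $(g,D_xf)_u=(g,R_kf)_u$ and $(gD_x,f)_u=(gR_k,f)_u$, and the boundary reductions follow the same way. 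If instead you keep $g$ left monogenic in $u$, the spherical Cauchy theorem does not apply to $(g,u\varphi)_u$ as written and the step fails; so make the sidedness explicit at the outset rather than leaving it as a caveat.
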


If both $f(x,u)$ and $g(x,u)$ are solutions of $R_k$, then we have \emph{Cauchy's theorem}.

\begin{corollary}[\cite{D}]\textbf{(Cauchy's theorem for $R_k)$}\\
If $R_kf(x,u)=0$ and $g(x,u)R_k=0$ for $f,g\in C^1(,\Omega ', \mathcal{M}_k)$, then
\begin{eqnarray*}
\int_{\partial\Omega}(g(x,u),P_kd\sigma_xf(x,u))_u=0.
\end{eqnarray*}
\end{corollary}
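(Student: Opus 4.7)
The plan is to invoke Stokes' theorem for $R_k$ (Theorem \ref{StokesRk}) directly; Cauchy's theorem is the degenerate case in which the bulk integral vanishes. First I would verify that the hypotheses of Stokes' theorem are met: the hypotheses on $\Omega,\Omega'$ are inherited verbatim, and we are given $f,g\in C^1(\Omega',\mathcal{M}_k)$, so in particular for each fixed $x$ the values $f(x,u),g(x,u)$ are $\mathcal{M}_k$-valued polynomials in $u$ as required by the inner product $(\,\cdot\,,\,\cdot\,)_u$ and by the action of the projections $P_k,P_{k,r}$.

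Next I would substitute the two Cauchy hypotheses $R_kf(x,u)=0$ and $g(x,u)R_k=0$ into the bulk side of Stokes' identity. The integrand on the left of Stokes' theorem is the sum $(g(x,u)R_k,f(x,u))_u + (g(x,u),R_kf(x,u))_u$; the first term vanishes because $g(x,u)R_k=0$ in $\Omega'\supset\overline{\Omega}$, and the second term vanishes because $R_kf(x,u)=0$ there. Since the inner product $(\,\cdot\,,\,\cdot\,)_u$ is defined by an integral over $\mathbb{S}^{m-1}$ of a continuous integrand, each term is identically zero pointwise in $x\in\Omega$, so the volume integral over $\Omega$ is zero.

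By Stokes' theorem, the boundary integral then equals this vanishing bulk integral, giving
\[
\int_{\partial\Omega}(g(x,u),P_kd\sigma_xf(x,u))_u = 0,
\]
as required. As a small bonus one obtains for free the equivalent formulation $\int_{\partial\Omega}(g(x,u)d\sigma_xP_{k,r},f(x,u))_u = 0$, since Stokes' theorem records both expressions for the boundary term. There is no real obstacle here: the only point requiring any care is to make sure one is using the correct Stokes' identity (the one stated in Theorem \ref{StokesRk}, with the projection $P_k$ absorbed into the boundary term), since naively writing $d\sigma_x$ without $P_k$ would not produce an $\mathcal{M}_k$-valued integrand and the pairing $(g,\,\cdot\,)_u$ would be with respect to the wrong space.
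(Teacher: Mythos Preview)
Your proposal is correct and matches the paper's own approach: the corollary is presented precisely as the specialization of Stokes' theorem for $R_k$ (Theorem~\ref{StokesRk}) to the case where both $R_kf=0$ and $gR_k=0$, so that the bulk integrand vanishes identically. The paper does not spell out any further details, and your verification of the hypotheses and the remark about the equivalent $P_{k,r}$ boundary form are entirely in line with it.
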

We also need the following well-known result.
\begin{proposition}[\cite{R}]\label{prop 1}
Suppose that $S$ is a smooth, orientable surface in $R^m$ and $f,\ g$ are integrable $\Clm$-valued functions. Then if $M(x)$ is a conformal transformation, we have
\begin{eqnarray*}
\int_Sf(M(x))n(M(x))g(M(x))ds=\int_{M^{-1}(S)}f(M(x))\tilde{J_1}(M,x)n(x)J_1(M,x)g(M(x))dM^{-1}(S),
\end{eqnarray*}
where $M(x)=(ax+b)(cx+d)^{-1}$, $M^{-1}(S)=\{x\in\mathbb{R}^m:M(x)\in S\}$, $J_1(M,x)=\displaystyle\frac{\widetilde{cx+d}}{||cx+d||^m}$.
\end{proposition}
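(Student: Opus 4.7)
The plan is to reduce the problem to the four building blocks of a general Möbius transformation via the Iwasawa decomposition — translation, dilation, reflection/rotation, and inversion — verify the identity on each, and then check compatibility with composition through a cocycle property for $J_1$.

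For a translation $M(x) = x + b$ one has $c = 0$, $d = 1$, so $J_1(M,x) = 1$; both the unit normal and the surface measure pull back unchanged to $M^{-1}(S) = S - b$, and the identity is immediate. For a uniform dilation $M(x) = \lambda x$, $J_1$ is a scalar and the identity reduces to the classical rescaling $ds(M(x)) = \lambda^{m-1}\, ds(x)$. For an orthogonal transformation $M(x) = \phi x \tilde{\phi}$ with $\phi \in Pin(m)$, $J_1$ is essentially a fixed Pin element and the identity $\tilde{\phi}\, n(M(x))\, \phi = n(x)$ is just the defining action of $Pin(m)$ on vectors, with $ds$ preserved.

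The main technical step is the inversion $M(x) = x^{-1}$, corresponding to $a = 0$, $b = 1$, $c = 1$, $d = 0$, so $J_1(M,x) = \tilde{x}/\|x\|^m = x/\|x\|^m$ since $\tilde{x} = x$ for $x \in \Rm$. By Liouville's theorem, the differential factors as $dM_x(v) = -x^{-1} v x^{-1} = -\|x\|^{-2}\, R_{x/\|x\|}(v)$, where $R_a(v) = a v a$ is the Clifford reflection across the hyperplane perpendicular to the unit vector $a$. Restricted to the tangent hyperplane, the tangential Jacobian has magnitude $\|x\|^{-2(m-1)}$, so $ds(M(x)) = \|x\|^{-2(m-1)}\, ds(x)$, and the outward normal transforms by the orthogonal part, $n(M(x)) = \pm R_{x/\|x\|}(n(x)) = \pm (x/\|x\|)\, n(x)\, (x/\|x\|)$. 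Combining,
\begin{equation*}
n(M(x))\, ds(M(x)) = \pm\frac{x\, n(x)\, x}{\|x\|^{2m}}\, ds(x) = \pm \frac{\tilde{x}}{\|x\|^m}\, n(x)\, \frac{x}{\|x\|^m}\, ds(x),
\end{equation*}
which, after absorbing the global orientation sign into the convention for the outward normal on $M^{-1}(S)$, equals $\tilde{J_1}(M,x)\, n(x)\, J_1(M,x)\, ds(x)$, as desired.

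To pass from the building blocks to an arbitrary conformal $M$, I would establish the cocycle identity $J_1(M_1 \circ M_2, x) = J_1(M_2, x)\, J_1(M_1, M_2(x))$ (and its reversion counterpart for $\tilde{J_1}$), which is a direct consequence of the Vahlen-matrix multiplication rule for the Clifford matrix representation of $M$. Iterating the one-step identity along the Iwasawa factorization then yields the proposition in full generality. The chief obstacle is bookkeeping: tracking the sign of the outward normal under inversion, keeping the reversion factor correctly on the left, and checking the cocycle property carefully — no new idea beyond Liouville's theorem and the Vahlen-matrix formalism is needed.
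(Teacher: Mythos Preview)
The paper does not give its own proof of this proposition; it is quoted as a ``well-known result'' with citation \cite{R} and used as a black box. Your Iwasawa-decomposition strategy --- verifying the identity on translations, dilations, $Pin(m)$-rotations, and inversion, then gluing via the cocycle identity $J_1(M_1\circ M_2,x)=J_1(M_2,x)\,J_1(M_1,M_2(x))$ coming from the Vahlen-matrix product --- is precisely the standard argument used in the cited source, and your computation for inversion (differential $dM_x(v)=-x^{-1}vx^{-1}$, tangential Jacobian $\|x\|^{-2(m-1)}$, normal rotated by $R_{x/\|x\|}$) is correct. The only genuine subtlety you identify yourself: the sign of the outward normal under inversion is an orientation convention, and once that is fixed consistently the argument closes. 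There is nothing to compare your proof against in this paper, but your sketch would stand as a correct proof.
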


Now we are ready to prove \emph{Theorem} \ref{theorem Rk}.
\begin{proof}
First, in Cauchy's theorem, we let $g(x,u)R_{k,r}=R_kf(x,u)=0$. Then we have
\begin{eqnarray*}
0=\int_{\partial\Omega}\int_{\mathbb{S}^{m-1}}g(x,u)P_kn(x)f(x,u)dS(u)d\sigma(x)
\end{eqnarray*}
Let $x=y^{-1}$, according to \emph{Proposition} \ref{prop 1}, we have
\begin{eqnarray*}
=\int_{\partial\Omega^{-1}}\int_{\mathbb{S}^{m-1}}g(u)P_{k,u}G(y)n(y)G(y)f(y^{-1},u)dS(u)d\sigma(y),
\end{eqnarray*}
where $G(y)=\displaystyle\frac{y}{||y||^m}$. Set $u=\displaystyle\frac{ywy}{||y||^2}$, since $P_{k,u}$ interchanges with $G(y)$ \cite{Li}, we have
\begin{eqnarray*}
&=&\int_{\partial\Omega^{-1}}\int_{\mathbb{S}^{m-1}}g(\displaystyle\frac{ywy}{||y||^2})G(y)P_{k,w}n(y)G(y)f(y^{-1},\displaystyle\frac{ywy}{||y||^2})dS(w)d\sigma(y)\\
&=&\int_{\partial\Omega^{-1}}(g(\displaystyle\frac{ywy}{||y||^2})G(y),P_{k,w}d\sigma_yG(y)f(y^{-1},\displaystyle\frac{ywy}{||y||^2}))_w,
\end{eqnarray*}
According to Stokes' theorem,
\begin{eqnarray*}
&=&\int_{\Omega^{-1}}(g(\displaystyle\frac{ywy}{||y||^2})G(y),R_{k,w}G(y)f(y^{-1},\displaystyle\frac{ywy}{||y||^2}))_w\\
&&+\int_{\Omega^{-1}}(g(\displaystyle\frac{ywy}{||y||^2})G(y)R_{k,w},G(y)f(y^{-1},\displaystyle\frac{ywy}{||y||^2}))_w.
\end{eqnarray*}
Since $g(x,u)$ is arbitrary in the kernel of $R_{k,r}$ and $f(x,u)$ is arbitrary in the kernel of $R_k$, we get $g(\displaystyle\frac{ywy}{||y||^2})G(y)R_{k,w}=R_{k,w}G(y)f(y^{-1},\displaystyle\frac{ywy}{||y||^2})=0$.
\end{proof}

\subsection{Intertwining operators of $R_k$}\hspace*{\fill} \\
\par
In $\mathcal{C}l_m$-valued function theory, if we have the M\"{o}bius transformation $y=\phi(x)=(ax+b)(cx+d)^{-1}$ and $D_x$ is the Dirac operator with respect to $x$ and $D_y$ is the Dirac operator with respect to $y$ then $D_x=J_{-1}^{-1}(\phi,x)D_yJ_1(\phi,x)$, where $J_{-1}(\phi,x)=\displaystyle\frac{cx+d}{||cx+d||^{m+2}}$ and $J_1(\phi,x)=\displaystyle\frac{\widetilde{cx+d}}{||cx+d||^m}$ \cite{R}. In the Rarita-Schwinger setting, we have a similar result:
\begin{theorem}[\cite{D}]\label{interwining}
For any fixed $x\in U\subset\mathbb{R}^m$, let $f(x,u)$ be a left monogenic polynomial homogeneous of degree $k$ in $u$. Then

\begin{eqnarray*}
J_{-1}^{-1}(\phi,y)R_{k,y,\omega}J_1(\phi,y)f(\phi(y),\displaystyle\frac{\widetilde{(cy+d)}\omega(cy+d)}{||cy+d||^2})=R_{k,x,u}f(x,u),
\end{eqnarray*}
where $x=\phi(y)=(ay+b)(cy+d)^{-1}$ is a M\"{o}bius transformation., $u=\displaystyle\frac{\widetilde{(cy+d)}\omega(cy+d)}{||cy+d||^2}$, $R_{k,x,u}$ and $R_{k,y,\omega}$ are Rarita-Schwinger operators.
\end{theorem}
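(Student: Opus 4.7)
The plan is to exploit the Iwasawa decomposition: every M\"obius transformation of $\Rm\cup\{\infty\}$ factors as a composition of translations, dilations, reflections, and the inversion $y\mapsto y^{-1}$. The factors $J_1(\phi,y)=\widetilde{cy+d}/||cy+d||^m$ and $J_{-1}(\phi,y)=(cy+d)/||cy+d||^{m+2}$ are cocycles under composition, i.e.\ $J_\alpha(\phi_2\circ\phi_1,y)=J_\alpha(\phi_2,\phi_1(y))\,J_\alpha(\phi_1,y)$, so the intertwining identity for any composition follows from its validity on each generator. For translations the identity is trivial ($J_1=J_{-1}=1$); for dilations it reduces to a homogeneity check; for reflections one invokes the reflection-covariance of $R_k$ from \cite{L}. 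The only case that requires real work is the inversion.

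For $\phi(y)=y^{-1}$ one has $J_1(\phi,y)=G(y)=y/||y||^m$ and $J_{-1}(\phi,y)=y/||y||^{m+2}$, with $x=y^{-1}$ and $u=y\omega y/||y||^2$. My strategy is to run the Stokes-based argument that proved Theorem \ref{theorem Rk}, but now without restricting $f$ or $g$ to the kernels of $R_k$ and $R_{k,r}$. Starting from Stokes' theorem (Theorem \ref{StokesRk}) applied to $f$ and an arbitrary test function $g\in C^1(\Omega',\Mk)$, I would change the $x$-variable via Proposition \ref{prop 1}, then change the $u$-variable via $u=y\omega y/||y||^2$, using that $P_{k,u}$ commutes with $G(y)$ (as invoked in the proof of Theorem \ref{theorem Rk}). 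Applying Stokes' theorem once more on the transformed domain $\phi^{-1}(\Omega)$ converts the resulting boundary integral back into volume integrals whose interior term involves $R_{k,y,\omega}\bigl(G(y)f(y^{-1},y\omega y/||y||^2)\bigr)$. Equating this with the transformed volume integral from the original Stokes expression and using the arbitrariness of $g$ together with the nondegeneracy of the Fischer inner product forces the pointwise identity
\begin{equation*}
R_{k,y,\omega}\bigl(G(y)f(y^{-1},y\omega y/||y||^2)\bigr) = J_{-1}(\phi,y)\,R_{k,x,u}f(x,u),
\end{equation*}
which is the inversion case; combined with the cocycle identity, this completes the proof.

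The principal obstacle is exactly what the counterexample in Section 3.1 highlights: $D_y$ alone is not conformally intertwined in the Rarita-Schwinger setting, so a naive chain-rule computation of $D_y\bigl(G(y)f(y^{-1},y\omega y/||y||^2)\bigr)$ produces spurious terms lying in $\omega\Mkk$. The merit of routing through Stokes' theorem and Proposition \ref{prop 1} is that the projection $P_k$ is already built into the boundary integrand, so these error terms are annihilated automatically before they can contaminate the identity. Beyond this, the only further algebraic work consists of verifying the cocycle identity for $J_\alpha$ and the commutation $P_{k,u}G(y)=G(y)P_{k,w}$ under the change $u=y\omega y/||y||^2$, both of which are short computations using the scalar-times-vector structure of $G(y)$ and the $D_u$-based definition of $P_k$.
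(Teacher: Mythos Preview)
Your overall architecture matches the paper's: compute the boundary integral $\int_{\partial\Omega}(g,P_k\,d\sigma_x\,f)_u$ two ways (Stokes then change variables, versus change variables then Stokes), equate the resulting volume integrals, and read off the intertwining relation. The Iwasawa reduction to inversion and the cocycle property of $J_\alpha$ are fine and harmless simplifications.

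The gap is in the step ``arbitrariness of $g$ forces the pointwise identity''. With $g$ unrestricted, equating the two volume expressions gives a \emph{four}-term identity at each $y$:
\[
(\tilde g\,R_{k,r},\tilde f)_\omega+(\tilde g,\,R_{k}\tilde f)_\omega
=\big[(g R_{k,r},f)_u+(g,R_kf)_u\big]\,j(y),
\]
where $\tilde g=g(\phi(y),\tfrac{y\omega y}{\|y\|^2})J_1$ and $\tilde f=J_1 f(\phi(y),\tfrac{y\omega y}{\|y\|^2})$. The term $\tilde g\,R_{k,r}$ is \emph{not} purely first order in $g$: the $y$-derivatives hit $J_1(\phi,y)$ and the argument $y\omega y/\|y\|^2$, producing zeroth-order contributions in $g$. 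These contaminate the pairing with $\tilde f$ and cannot be separated from $(\tilde g,\,R_k\tilde f)_\omega$ by varying the value of $g$ alone. So nondegeneracy of the Fischer inner product does not by itself isolate $R_k\tilde f$.

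The paper handles exactly this point by restricting $g$ to $\ker R_{k,r}$ and invoking Theorem~\ref{theorem Rk} (right version) to conclude that then also $\tilde g\,R_{k,r}=0$; the four-term identity collapses to $(\tilde g,\,R_k\tilde f)_\omega=(\tilde g,\,J_{-1}R_kf)_\omega$. Since any $p_k\in\Mk$ extended constantly in $x$ lies in $\ker R_{k,r}$, the value $\tilde g(y_0,\cdot)$ is still arbitrary in $\Mk$, and nondegeneracy finishes the argument. In short, your plan works once you put back the one ingredient you said you would drop: choose $g\in\ker R_{k,r}$ and use Theorem~\ref{theorem Rk} to kill the extra boundary-to-volume term on the transformed side.
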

\begin{proof}
We use the techniques in \cite{E} to prove this Theorem. Let $f(x,u),\ g(x,u)\in C^{\infty}(\Omega',\Clm)$ and $\Omega$ and $\Omega'$ are as in Theorem \ref{StokesRk}. We have
\begin{eqnarray*}
&&\int_{\partial\Omega}(g(x,u),P_kn(x)f(x,u))_udx^m\\
&=&\int_{\phi^{-1}(\partial\Omega)}\big(g(\phi(y),\frac{y\omega y}{||y||^2})P_kJ_1(\phi,y)n(y)J_1(\phi,y)f(\phi(y),\frac{y\omega y}{||y||^2})\big)_{\omega}dy^m\\
&=&\int_{\phi^{-1}(\partial\Omega)}\big(g(\phi(y),\frac{y\omega y}{||y||^2})J_1(\phi,y),P_kn(y)J_1(\phi,y)f(\phi(y),\frac{y\omega y}{||y||^2}))_{\omega}dy^m\\
\end{eqnarray*}
Then we apply the Stokes' Theorem for $R_k,$
\begin{eqnarray}\label{44}
&&\int_{\phi^{-1}(\Omega)}\big(g(\phi(y),\frac{y\omega y}{||y||^2})J_1(\phi,y)R_k,J_1(\phi,y)f(\phi(y),\frac{y\omega y}{||y||^2})\big)_{\omega} \nonumber \\
&+&\big(g(\phi(y),\frac{y\omega y}{||y||^2})J_1(\phi,y),R_kJ_1(\phi,y)f(\phi(y),\frac{y\omega y}{||y||^2})\big)_{\omega}dy^m,
\end{eqnarray}
where $u=\displaystyle\frac{y\omega y}{||y||^2}$.
On the other hand,
\begin{eqnarray}\label{55}
&&\int_{\partial\Omega}(g(x,u),P_kn(x)f(x,u))_udx^m\nonumber \\
&=&\int_{\Omega}\big[\big(g(x,u)R_k,f(x,u)\big)_u+\big(g(x,u),R_kf(x,u)\big)_u\big]dx^m \nonumber \\
&=&\int_{\phi^{-1}(\Omega)}\big[\big(g(x,u)R_k,f(x,u)\big)_u+\big(g(x,u),R_kf(x,u)\big)_u\big]j(y)dy^m \nonumber \\
&=&\int_{\phi^{-1}(\Omega)}\big[\big(g(x,u)R_k,f(x,u)j(y)\big)_u+\big(g(x,u),J_1(\phi,y)J_{-1}(\phi,y)R_kf(x,u)\big)_u\big]dy^m,
\end{eqnarray}
where $j(y)=J_{-1}(\phi,y)J_1(\phi,y)$ is the Jacobian. Now, we let arbitrary $g(x,u)\in ker R_{k,r}$ and since $J_1(\phi,y)g(\phi(y),\displaystyle\frac{y\omega y}{||y||^2})R_{k,r}=0$, then from (\ref{44}) and (\ref{55}), we get
\begin{eqnarray*}
&&\int_{\phi^{-1}(\Omega)}\big(g(\phi(y),\frac{y\omega y}{||y||^2})J_1(\phi,y)R_kJ_1(\phi,y)f(\phi(y),\frac{y\omega y}{||y||^2})\big)_{\omega}dy^m\\
&=&\int_{\phi^{-1}(\Omega)}\big(g(\phi(y),\frac{y\omega y}{||y||^2}),J_1(\phi,y)J_{-1}(\phi,y)R_kf(x,u)\big)_udy^m\\
&=&\int_{\phi^{-1}(\Omega)}\big(g(\phi(y),\frac{y\omega y}{||y||^2})J_1(\phi,y)J_{-1}(\phi,y)R_kf(x,u)\big)_{\omega}dy^m
\end{eqnarray*}
Since $\Omega$ is an arbitrary domain in $\Rm$, we have
$$\big(g(\phi(y),\frac{y\omega y}{||y||^2})J_1(\phi,y)R_kJ_1(\phi,y)f(\phi(y),\frac{y\omega y}{||y||^2})\big)_{\omega}=\big(g(\phi(y),\frac{y\omega y}{||y||^2})J_1(\phi,y)J_{-1}(\phi,y)R_kf(x,u)\big)_{\omega}$$
Also, $g(x,u)$ is arbitrary, we get
$$J_1(\phi,y)R_kJ_1(\phi,y)f(\phi(y),\frac{y\omega y}{||y||^2})=J_1(\phi,y)J_{-1}(\phi,y)R_kf(x,u).$$
Theorem \ref{interwining} follows immediately.
\end{proof}
\section{Rarita-Schwinger type operators}\hspace*{\fill} \\
\par
In the construction of the Rarita-Schwinger operator above, we notice that the Rarita-Schwinger operator is actually a projection map $P_k$ followed by the Dirac operator $D_x$, where in the Almansi-Fischer decomposition,
\begin{eqnarray*}
&&\Mk\xrightarrow{D_x}\Hk\otimes\mathcal{S}=\Mk\oplus u\Mkk\\
&&P_k:\ \Hk\otimes\mathcal{S}\longrightarrow\Mk;\\
&&I-P_k:\ \Hk\otimes\mathcal{S}\longrightarrow u\Mkk.
\end{eqnarray*}
If we project to the $u\Mkk$ component after we apply $D_x$, we get a Rarita-Schwinger type operator from $\Mk$ to $u\Mkk$.
\begin{eqnarray*}
\Mk\xrightarrow{D_x}\Hk\otimes\mathcal{S}\xrightarrow{I-P_k}u\Mkk.
\end{eqnarray*}
Similarly, starting with $u\Mkk$, we get another two Rarita-Schwinger type operators.
\begin{eqnarray*}
&&u\Mkk\xrightarrow{D_x}\Hk\otimes\mathcal{S}\xrightarrow {P_k}\Mk;\\
&&u\Mkk\xrightarrow{D_x}\Hk\otimes\mathcal{S}\xrightarrow{I-P_k}u\Mkk.
\end{eqnarray*}

In a summary, there are three further Rarita-Schwinger type operators as follows:

\begin{eqnarray*}
&&T_k^*:\ C^{\infty}(\Rm,\Mk)\longrightarrow C^{\infty}(\Rm,u\Mkk),\ \ \ T_k^*=(I-P_k)D_x=\frac{-uD_u}{m+2k-2}D_x;\\
&&T_k:\ C^{\infty}(\Rm,u\Mkk)\longrightarrow C^{\infty}(\Rm,\Mk),\ \ \ T_k=P_kD_x=(\frac{uD_u}{m+2k-2}+1)D_x;\\
&&Q_k:\ C^{\infty}(\Rm,u\Mkk)\longrightarrow C^{\infty}(\Rm,u\Mkk),\ \ \ Q_k=(I-P_k)D_x=\frac{-uD_u}{m+2k-2}D_x,
\end{eqnarray*}
$T_k^*$ and $T_k$ are also called the \emph{dual-twistor operator} and \emph{twistor operator}. See \cite{B}. We also have
\begin{eqnarray*}
&&T_{k,r}^*:\ C^{\infty}(\Rm,\overline{\mathcal{M}}_k)\longrightarrow C^{\infty}(\Rm,\overline{\mathcal{M}}_{k-1}u),\ \ \ T_{k,r}^*=D_x(I-P_{k,r});\\
&&T_{k,r}:\ C^{\infty}(\Rm,\overline{\mathcal{M}}_{k-1}u)\longrightarrow C^{\infty}(\Rm,\overline{\mathcal{M}}_k),\ \ \ T_k=D_xP_{k,r};\\
&&Q_{k,r}:\ C^{\infty}(\Rm,\overline{\mathcal{M}}_{k-1}u)\longrightarrow C^{\infty}(\Rm,\overline{\mathcal{M}}_{k-1}u),\ \ \ Q_k=D_x(I-P_{k,r}).
\end{eqnarray*}
\subsection{Conformal Invariance}\hspace*{\fill} \\
\par
We cannot prove conformal invariance and intertwining operators of $Q_k$ with the assumption that $D_x$ is conformally invariant. Here, we correct this using similar techniques that we used in Section 3 for the Rarita-Schwinger operators.\\
\par
Following our Iwasawa decomposition we only need to show the conformal invariance of $Q_k$ under inversion. We also need Cauchy's theorem for the $Q_k$ operator.
\begin{theorem}[\cite{Li}]\textbf{(Stokes' theorem for $Q_k$ operator)}\label{StokesQk}\\
Let $\Omega'$ and $\Omega$ be domains in $\mathbb{R}^m$ and suppose the closure of $\Omega$ lies in $\Omega'$. Further suppose the closure of $\Omega$ is compact and the boundary of $\Omega$, $\partial\Omega$ is piecewise smooth. Then for $f,\ g\in C^1(\Omega',\Mkk)$, we have
\begin{eqnarray*}
&&\int_{\Omega}[(g(x,u)uQ_{k,r},uf(x,u))_u+(g(x,u)u,Q_kuf(x,u))_u]dx^m\\
&=&\int_{\partial\Omega}(g(x,u)u,(I-P_k)d\sigma_xuf(x,u))_u\\
&=&\int_{\partial\Omega}(g(x,u)ud\sigma_x(I-P_{k,r}),uf(x,u))_u\\
\end{eqnarray*}
where $P_k$ and $P_{k,r}$ are the left and right projections, $d\sigma_x=n(x)d\sigma(x)$, $d\sigma(x)$ is the area element. $(P(u),Q(u))_u=\int_{\mathbb{S}^{m-1}}P(u)Q(u)dS(u)$ is the inner product for any pair of $\Clm$-valued polynomials.
\end{theorem}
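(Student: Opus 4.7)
The plan is to reduce the claim to the classical Clifford-Stokes identity for $D_x$ applied to the auxiliary $\Clm$-valued functions $G(x,u) := g(x,u)u$ and $H(x,u) := uf(x,u)$, then integrate out the $u$-variable over $\Sm$, and finally use an Almansi-Fischer orthogonality to identify the correctly projected form on each side. This mirrors the derivation of Stokes' theorem for $R_k$, with the roles of $\Mk$ and $u\Mkk$ in the Almansi-Fischer decomposition $\Hk = \Mk\oplus u\Mkk$ interchanged.

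First I would record that for each fixed $x$, $H(x,\cdot)\in u\Mkk\subset\Hk$, and that $D_xH$ again lies in $\Hk$ since left multiplication by the constant vectors $e_i$ commutes with $\Delta_u$ and preserves the $u$-degree. The classical Clifford-Stokes identity
\[
\int_{\Omega}[(GD_x)H + G(D_xH)]\, dx^m = \int_{\partial\Omega} G\, n(x)\, H\, d\sigma(x),
\]
applied pointwise in $u$ and integrated against $dS(u)$ over $\Sm$, becomes an identity between pairings $(\cdot,\cdot)_u$. I would then insert $I = P_k + (I-P_k)$ into each $\Hk$-valued factor on the right of the pairing: the bulk pairing $(gu, D_xH)_u$ splits as $(gu, Q_kH)_u + (gu, P_kD_xH)_u$, while the boundary pairing $(gu, n(x)H)_u$ splits as $(gu, (I-P_k)(n(x)H))_u + (gu, P_k(n(x)H))_u$. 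Inserting $I = P_{k,r} + (I-P_{k,r})$ on the left factor of the right-action term $((gu)D_x, uf)_u$ handles that half symmetrically.

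The stated identity will follow once I establish the orthogonality
\[
\int_{\Sm} q(u)\, u\, p(u)\, dS(u) = 0 \qquad (q\in\Mkk,\ p\in\Mk),
\]
which is the single ingredient that kills every unwanted piece above. This is the companion of the orthogonality invoked in \cite{D} for Stokes' theorem for $R_k$; it can be obtained either from the spherical Stokes identity for $D_u$ on the unit ball together with $D_u p = 0$ and a careful accounting of the resulting monogenic-times-$u$ boundary term, or, more conceptually, from the fact that $\Mk$ and $u\Mkk$ sit in distinct $Spin(m)$-isotypic summands of $\Hk$ and hence are orthogonal under the invariant spherical pairing.

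The principal obstacle is this orthogonality step; once granted, the remaining identifications --- the bulk pairing with $((gu)Q_{k,r}, uf)_u + (gu, Q_k(uf))_u$, and the boundary pairing with $(gu, (I-P_k)d\sigma_x uf)_u$ --- reduce to symbolic manipulation. The equivalent formulation with $(I-P_{k,r})$ placed on the left factor follows either by repeating the argument using the right Dirac version of classical Stokes or by transposing the projection inside the pairing, completing the three forms displayed in the statement.
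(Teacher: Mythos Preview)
The paper does not actually supply a proof of this theorem; it is quoted from \cite{Li} and used as an input to the proof of Theorem~\ref{theorem Qk}. So there is nothing in the present paper to compare your argument against line by line. That said, your outline is precisely the standard route used in \cite{D,Li} for these Stokes identities: apply the classical Clifford--Stokes formula
\[
\int_{\Omega}\big[(GD_x)H + G(D_xH)\big]\,dx^m \;=\; \int_{\partial\Omega} G\,n(x)\,H\,d\sigma(x)
\]
to $G=g(x,u)u$ and $H=uf(x,u)$, integrate over $\Sm$ in $u$, and then use the Almansi--Fischer orthogonality between $\Mk$ and $u\Mkk$ to replace $D_x$ by $Q_k$ (respectively $Q_{k,r}$) and $n(x)$ by $(I-P_k)n(x)$ (respectively $n(x)(I-P_{k,r})$). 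This is exactly the mechanism behind Theorem~\ref{StokesRk}, with the roles of the two summands swapped, so your plan is sound.

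One point you should sharpen: your orthogonality lemma
\[
\int_{\Sm} q(u)\,u\,p(u)\,dS(u)=0,\qquad q\in\Mkk,\ p\in\Mk,
\]
follows from Cauchy's theorem on the unit ball only if the \emph{left} factor is \emph{right} monogenic in $u$ (so that $qD_u=0$), not merely left monogenic. In the present setting the left factor arising on each side is either $g(x,u)u$ viewed in the right decomposition $\overline{\Mk}\oplus\overline{\Mkk}u$, or the projected piece $(gu\,D_x)P_{k,r}\in\overline{\Mk}$; in both cases right monogenicity is what is available, and Cauchy on the ball then kills the unwanted $P_k$-- and $P_{k,r}$--pieces exactly as you describe. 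Your representation-theoretic alternative (distinct $Spin(m)$ isotypes forcing orthogonality) requires the pairing to be $Spin(m)$-invariant, which is clean for the Fischer pairing with Clifford conjugation but not for the bilinear pairing written in the statement; the Cauchy-on-the-ball argument is the safer justification here. With that caveat handled, the rest is indeed symbolic manipulation and matches the approach of \cite{Li}.
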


When $g(x,u)uQ_{k,r}=Q_kuf(x,u)=0$, we get Cauchy's theorem for $Q_k$.

\begin{corollary}[\cite{Li}]\textbf{(Cauchy's theorem for $Q_k$ operator)}\\
If $Q_kuf(x,u)=0$ and $ug(x,u)Q_{k,r}=0$ for $f,g\in C^1(,\Omega ', \Mkk)$, then
\begin{eqnarray*}
\int_{\partial\Omega}(g(x,u)u,(I-P_k)d\sigma_xuf(x,u))_u=0
\end{eqnarray*}
\end{corollary}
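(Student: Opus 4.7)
The plan is to derive this corollary as a direct specialization of the preceding Stokes' theorem for $Q_k$, exactly as Cauchy's theorem for $R_k$ was obtained from the $R_k$-Stokes identity. I would simply substitute the two vanishing hypotheses $Q_k u f(x,u)=0$ and $u g(x,u) Q_{k,r}=0$ into the bulk integrand on the left-hand side of the Stokes identity and read off the conclusion on the boundary.

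Concretely, the first step is to invoke Theorem (Stokes' theorem for $Q_k$), applied to $f,g\in C^1(\Omega',\Mkk)$ on the domain $\Omega$ with $\overline{\Omega}\subset\Omega'$ and piecewise smooth $\partial\Omega$. That theorem says
\begin{eqnarray*}
\int_{\Omega}\bigl[(g(x,u)u Q_{k,r},\, u f(x,u))_u + (g(x,u)u,\, Q_k u f(x,u))_u\bigr]\,dx^m = \int_{\partial\Omega}(g(x,u)u,\, (I-P_k)\,d\sigma_x\, u f(x,u))_u.
\end{eqnarray*}
The second step is to note that by the hypothesis $Q_k u f(x,u)=0$ the inner product $(g(x,u)u,\, Q_k u f(x,u))_u$ vanishes identically in $x$ and $u$, killing the second bulk term. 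Similarly the hypothesis on $g$ (read, consistently with the Stokes statement, as $g(x,u)u Q_{k,r}=0$) makes the factor $g(x,u)u Q_{k,r}$ identically zero, so the first bulk term vanishes as well. Hence the left-hand side of the Stokes identity is $0$, which forces the boundary integral on the right to be $0$, giving exactly the statement of Cauchy's theorem for $Q_k$.

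Since the proof is a one-line specialization, there is essentially no serious obstacle; the only thing worth being careful about is the notational convention for where the $u$ sits relative to $g$ in the right-Clifford-action version of the $Q_{k,r}$ equation. I would therefore briefly remark that in the conventions of Theorem (Stokes' theorem for $Q_k$), the hypothesis $u g(x,u) Q_{k,r}=0$ should be understood so that it annihilates the first bracket appearing in the Stokes identity — i.e., as $g(x,u)u Q_{k,r}=0$ acting on the right on $g(x,u)u\in \overline{\mathcal{M}}_{k-1}u$ — and once this is matched, the conclusion follows immediately with no further computation.
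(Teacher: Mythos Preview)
Your proposal is correct and matches the paper's own argument exactly: the paper derives this corollary in one line from the preceding Stokes' theorem for $Q_k$ by substituting the two vanishing hypotheses, just as you do. Your remark about reconciling the placement of $u$ in the hypothesis $ug(x,u)Q_{k,r}=0$ with the form $g(x,u)uQ_{k,r}$ appearing in the Stokes identity is apt, since the paper itself writes the hypothesis both ways without comment.
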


The conformal invariance of the equation $Q_kuf=0$ under inversion is as follows

\begin{theorem}\label{theorem Qk}
For any fixed $x\in U\subset\mathbb{R}^m$, let $f(x,u)$ be a left monogenic polynomial homogeneous of degree $k-1$ in $u$. If $Q_{k,u}uf(x,u)=0$,  then $Q_{k,w}G(y)\displaystyle\frac{ywy}{||y||^2}f(y^{-1},\frac{ywy}{||y||^2})=0$, where $G(y)=\displaystyle\frac{y}{||y||^m},\ x=y^{-1},\ u=\frac{ywy}{||y||^2}\in \mathbb{R}^m$.
\end{theorem}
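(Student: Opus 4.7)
The plan is to mirror the proof of Theorem \ref{theorem Rk} step by step, replacing the projection $P_k$ with its complement $I-P_k$, the factor $f(x,u)$ with $uf(x,u)$, and the Rarita--Schwinger Stokes/Cauchy pair with its $Q_k$-analogue (Theorem \ref{StokesQk} and the corresponding Cauchy statement). First I would fix $f, g \in C^{1}(\Omega',\Mkk)$ with $Q_k u f(x,u) = 0$ and $u g(x,u) Q_{k,r} = 0$, so that Cauchy's theorem for $Q_k$ gives
\begin{eqnarray*}
0 = \int_{\partial\Omega} \int_{\mathbb{S}^{m-1}} g(x,u) u \,(I-P_k)\, n(x)\, u f(x,u) \, dS(u)\, d\sigma(x).
\end{eqnarray*}

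Next I would apply the inversion $x = y^{-1}$. By Proposition \ref{prop 1} we have $n(x)\,d\sigma(x) = G(y)\,n(y)\,G(y)\,d\sigma(y)$ with $G(y) = y/\|y\|^m$, converting the boundary integral into one over $\partial\Omega^{-1}$. I would then substitute $u = ywy/\|y\|^2$ in the spherical variable and invoke the fact (shown in \cite{Li}) that $P_{k,u}$ commutes with multiplication by $G(y)$, so that $(I-P_{k,u})$ commutes with $G(y)$ as well. This lets me absorb all the Möbius factors into the $Q_k$-inner product in the new variable $w$, yielding
\begin{eqnarray*}
0 = \int_{\partial\Omega^{-1}} \Bigl(g\bigl(\tfrac{ywy}{\|y\|^2}\bigr) u\, G(y),\, (I-P_{k,w})\, d\sigma_y\, G(y)\, \tfrac{ywy}{\|y\|^2} f\bigl(y^{-1},\tfrac{ywy}{\|y\|^2}\bigr)\Bigr)_w.
\end{eqnarray*}

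Now I would apply Stokes' theorem for $Q_k$ (Theorem \ref{StokesQk}) in the $w$ variable to convert this boundary integral into a volume integral over $\Omega^{-1}$ consisting of two terms: one with $Q_{k,w,r}$ acting on $g(\tfrac{ywy}{\|y\|^2}) u\, G(y)$, and one with $Q_{k,w}$ acting on $G(y)\tfrac{ywy}{\|y\|^2} f(y^{-1},\tfrac{ywy}{\|y\|^2})$. Since $g$ is arbitrary in the right kernel of $Q_{k,r}$ (and an analogous check shows that the Möbius dressing of a right-kernel element remains in the right kernel), the first term drops out, and since $\Omega^{-1}$ is arbitrary together with the arbitrariness of $g$, the pointwise integrand in the $w$-slot of the surviving term must vanish. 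This gives the desired equality
\begin{eqnarray*}
Q_{k,w}\, G(y)\, \tfrac{ywy}{\|y\|^2} f\bigl(y^{-1},\tfrac{ywy}{\|y\|^2}\bigr) = 0.
\end{eqnarray*}

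The main obstacle is the algebraic bookkeeping around the extra factor $u$ that now sits inside the functions $uf$ and $ug$: one must verify that under $u = ywy/\|y\|^2$, the function $u g(x,u)$ transforms into something of the form $g(\tfrac{ywy}{\|y\|^2}) u\,G(y)$ lying in the right kernel of $Q_{k,w,r}$, which is the $Q_k$-analogue of the interchange identity used in Theorem \ref{theorem Rk} and is precisely the point at which the commutation of $G(y)$ with $I-P_k$ (as well as the monogenicity of $ywy/\|y\|^2$ in $y$) needs to be used carefully. Once that interchange is in place, the remainder of the proof is a mechanical parallel of the $R_k$ argument.
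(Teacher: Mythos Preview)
Your plan is correct and mirrors the paper's proof essentially step for step: Cauchy's theorem for $Q_k$, the inversion change of variables via Proposition~\ref{prop 1}, the interchange of $G(y)$ with $I-P_k$, and then Stokes' theorem for $Q_k$ in the $(y,w)$ variables. Two small points: after the substitution $u=\tfrac{ywy}{\|y\|^2}$ you should write $\tfrac{ywy}{\|y\|^2}$ rather than $u$ in the left-hand factor (as the paper does), and in the final step the paper does \emph{not} invoke a separate check that the M\"obius dressing of $g$ remains in $\ker Q_{k,r}$---that would be the right-handed version of the very theorem being proved and hence circular---but instead concludes both $g(\tfrac{ywy}{\|y\|^2})\tfrac{ywy}{\|y\|^2}G(y)Q_{k,w}=0$ and $Q_{k,w}G(y)\tfrac{ywy}{\|y\|^2}f(y^{-1},\tfrac{ywy}{\|y\|^2})=0$ simultaneously from the arbitrariness of $f\in\ker Q_k$ and $g\in\ker Q_{k,r}$.
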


\begin{proof}
First, in Cauchy's theorem, we let $ug(x,u)Q_{k,r}=Q_kuf(x,u)=0$. Then we have
\begin{eqnarray*}
0=\int_{\partial\Omega}\int_{\mathbb{S}^{m-1}}g(u)u(I-P_k)n(x)uf(x,u)dS(u)d\sigma(x)
\end{eqnarray*}
Let $x=y^{-1}$, we have
\begin{eqnarray*}
=\int_{\partial\Omega^{-1}}\int_{\mathbb{S}^{m-1}}g(u)u(I-P_{k,u})G(y)n(y)G(y)uf(y^{-1},u)dS(u)d\sigma(y),
\end{eqnarray*}
where $G(y)=\displaystyle\frac{y}{||y||^m}$. Set $u=\displaystyle\frac{ywy}{||y||^2}$, since $I-P_{k,u}$ interchanges with $G(y)$ \cite{D}, we have
\begin{eqnarray*}
&=&\int_{\partial\Omega^{-1}}\int_{\mathbb{S}^{m-1}}g(\displaystyle\frac{ywy}{||y||^2})\frac{ywy}{||y||^2}G(y)(I-P_{k,w})n(y)G(y)\displaystyle\frac{ywy}{||y||^2}f(y^{-1},\displaystyle\frac{ywy}{||y||^2})dS(w)d\sigma(y)\\
&=&\int_{\partial\Omega^{-1}}\big(g(\displaystyle\frac{ywy}{||y||^2})\displaystyle\frac{ywy}{||y||^2}G(y),(I-P_{k,w})d\sigma_yG(y)\displaystyle\frac{ywy}{||y||^2}f(y^{-1},\displaystyle\frac{ywy}{||y||^2})\big)_w.
\end{eqnarray*}
According to Stokes' theorem for $Q_k$,
\begin{eqnarray*}
&=&\int_{\Omega^{-1}}\big(g(\displaystyle\frac{ywy}{||y||^2})\displaystyle\frac{ywy}{||y||^2}G(y),Q_{k,w}G(y)\displaystyle\frac{ywy}{||y||^2}f(y^{-1},\displaystyle\frac{ywy}{||y||^2})\big)_w\\
&&+\int_{\Omega^{-1}}\big(g(\displaystyle\frac{ywy}{||y||^2})\displaystyle\frac{ywy}{||y||^2}G(y)Q_{k,w},G(y)\displaystyle\frac{ywy}{||y||^2}f(y^{-1},\displaystyle\frac{ywy}{||y||^2})\big)_w.
\end{eqnarray*}
Since $ug(x,u)$ is arbitrary in the kernel of $Q_{k,r}$ and $uf(x,u)$ is arbitrary in the kernel of $Q_k$, we get $g(\displaystyle\frac{ywy}{||y||^2})\displaystyle\frac{ywy}{||y||^2}G(y)Q_{k,w}=Q_{k,w}G(y)\displaystyle\frac{ywy}{||y||^2}f(y^{-1},\displaystyle\frac{ywy}{||y||^2})=0$.
\end{proof}

To complete this section, we provide $Stokes'\ theorem$ for other Rarita-Schwinger type operators as follows:
\begin{theorem}\textbf{(Stokes' theorem for $T_k)$}\\
Let $\Omega '$ and $\Omega$ be domains in $\mathbb{R}^m$ and suppose the closure of $\Omega$ lies in $\Omega '$. Further suppose the closure of $\Omega$ is compact and $\partial \Omega$ is piecewise smooth. Let $f,g\in C^1(\Omega ',\Mk)$. Then
\begin{eqnarray*}
&&\int_{\Omega}\big[(g(x,u)T_k,f(x,u))_u+(g(x,u),T_kf(x,u))\big]dx^m\\
=&&\int_{\partial\Omega}(g(x,u),P_kd\sigma_xf(x,u))_u\\
=&&\int_{\partial\Omega}(g(x,u)d\sigma_xP_{k,r},f(x,u))_u,
\end{eqnarray*}
where $P_k$ and $P_{k,r}$ are the left and right projections, $d\sigma_x=n(x)d\sigma(x)$ and $(P(u),Q(u))_u=\int_{\mathbb{S}^{m-1}}P(u)Q(u)dS(u)$ is the inner product for any pair of $\mathcal{C}l_m$-valued polynomials.
\end{theorem}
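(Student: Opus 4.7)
The proof proceeds in close analogy with Stokes' theorem for $R_k$ (Theorem \ref{StokesRk}), since $T_k=P_kD_x$ has the same compositional structure as $R_k$: a Dirac operator in $x$ followed by the left Almansi--Fischer projection in $u$. The plan is to start from the classical Stokes' formula for $D_x$ on $\Clm$-valued functions, integrate it in the $u$-variable over $\Sm$ so as to recognize the Fischer pairing, and then use the Almansi--Fischer decomposition $\Hk=\Mk\oplus u\Mkk$ together with Clifford--Cauchy orthogonality to replace $D_x$ by $T_k$ on both sides of the identity. No new analytic ingredient is required beyond what was already used for $R_k$.

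First, for each fixed $u\in\Sm$, applying the classical Stokes' identity for $D_x$ to the $\Clm$-valued maps $x\mapsto g(x,u)$ and $x\mapsto f(x,u)$ yields
\[
\int_\Omega \bigl[(g(x,u)D_x)f(x,u) + g(x,u)(D_xf(x,u))\bigr]\,dx^m \;=\; \int_{\partial\Omega} g(x,u)\,d\sigma_x\,f(x,u).
\]
Integrating against $dS(u)$ over $\Sm$ and applying Fubini rewrites this as
\[
\int_\Omega\bigl[(gD_x,f)_u + (g,D_xf)_u\bigr]\,dx^m \;=\; \int_{\partial\Omega}(g,d\sigma_xf)_u.
\]
Next, I convert $D_x$ into $T_k$ inside each Fischer pairing. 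For fixed $x$, $D_xf(x,\cdot)\in\Hk$ because $\Delta_u$ commutes with $D_x$, so Almansi--Fischer gives $D_xf=P_k(D_xf)+uq$ with $q\in\Mkk$. Since $g(x,\cdot)\in\Mk$, the Clifford--Cauchy orthogonality cited in the proof of Theorem \ref{StokesRk} forces $(g,uq)_u=0$, so $(g,D_xf)_u=(g,T_kf)_u$. The symmetric calculation using the right Almansi--Fischer decomposition $\Hk=\overline{\mathcal{M}}_k\oplus\overline{\mathcal{M}}_{k-1}u$ yields $(gD_x,f)_u=(gT_k,f)_u$.

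For the boundary integrand, write $d\sigma_xf(x,u)=n(x)f(x,u)\,d\sigma(x)$, and observe that $n(x)f(x,u)$ lies in $\Hk$ as a polynomial in $u$ (harmonic in $u$ because $f$ is, and homogeneous of degree $k$). Applying Almansi--Fischer and Clifford--Cauchy orthogonality once more upgrades $(g,d\sigma_xf)_u$ to $(g,P_k d\sigma_x f)_u$, which is the first claimed equality. The second equality $(g\,d\sigma_x P_{k,r},f)_u$ follows by running the same argument on the $g$-side and using the right projection $P_{k,r}$. There is no substantive obstacle in this argument; the only point requiring care is book-keeping left- versus right-monogenicity in the products $gD_x$ and $n(x)f(x,u)$ so that the appropriate projection is inserted in each slot of the pairing.
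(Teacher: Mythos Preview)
The paper does not supply a proof for this theorem; it is simply stated at the end of Section~4 alongside the companion results for $T_k^*$ and the alternative Stokes form, with no argument given. Your route---classical Stokes for $D_x$, integrate over $\Sm$ to obtain the Fischer pairing, then invoke the Almansi--Fischer decomposition together with Clifford--Cauchy orthogonality to trade $D_x$ for the projected operator on each side---is exactly the expected argument and mirrors what is done (and cited from \cite{D}) for $R_k$ in Theorem~\ref{StokesRk}.

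One point you pass over deserves comment. By the paper's own definitions, $T_k$ has domain $C^\infty(\Rm,u\Mkk)$, while on $\Mk$-valued inputs the formula $P_kD_x$ is precisely $R_k$. Thus, with the printed hypothesis $f,g\in C^1(\Omega',\Mk)$, the identity you establish is literally Theorem~\ref{StokesRk} with the symbol $R_k$ relabelled $T_k$; your line ``$(g,D_xf)_u=(g,T_kf)_u$'' silently uses $T_k$ as a synonym for $P_kD_x$ irrespective of domain. The stated function spaces are almost certainly a slip carried over from Theorem~\ref{StokesRk} (the $T_k^*$ Stokes theorem immediately below has the mirror-image mismatch, with $f,g$ placed in $u\Mkk$ although $T_k^*$ is defined on $\Mk$). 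For a genuine $T_k$ statement one should take $f$ valued in $u\Mkk$ so that $T_kf$ is actually the twistor operator; your scheme then goes through unchanged in structure, with only the bookkeeping of which Almansi--Fischer summand is killed by Clifford--Cauchy orthogonality in each slot needing adjustment.
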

\begin{theorem}\textbf{(Stokes' theorem for $T_k^* )$}\\
Let $\Omega '$ and $\Omega$ be domains in $\mathbb{R}^m$ and suppose the closure of $\Omega$ lies in $\Omega '$. Further suppose the closure of $\Omega$ is compact and $\partial \Omega$ is piecewise smooth. Let $f,g\in C^1(\Omega ',u\Mkk)$. Then
\begin{eqnarray*}
&&\int_{\Omega}\big[(g(x,u)T_k^*,f(x,u))_u+(g(x,u),T_k^*f(x,u))\big]dx^m\\
=&&\int_{\partial\Omega}(g(x,u),(I-P_k)d\sigma_xf(x,u))_u\\
=&&\int_{\partial\Omega}(g(x,u)d\sigma_x(I-P_{k,r}),f(x,u))_u,
\end{eqnarray*}
where $P_k$ and $P_{k,r}$ are the left and right projections, $d\sigma_x=n(x)d\sigma(x)$ and $(P(u),Q(u))_u=\int_{\mathbb{S}^{m-1}}P(u)Q(u)dS(u)$ is the inner product for any pair of $\mathcal{C}l_m$-valued polynomials.
\end{theorem}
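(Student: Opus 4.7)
The plan is to mimic the proof of Stokes' theorem for $R_k$ (Theorem \ref{StokesRk}), replacing the projection $P_k$ by its complement $I-P_k$ and adapting the argument to inputs valued in $u\Mkk$. I would first invoke the classical Stokes' theorem for the Dirac operator $D_x$, which is valid for arbitrary $\Clm$-valued $C^1$ functions of $x$: for each fixed $u\in\Sm$,
\begin{equation*}
\int_\Omega \bigl[(gD_x)f + g(D_xf)\bigr]\,dx^m = \int_{\partial\Omega} g\,d\sigma_x\,f,
\end{equation*}
where $gD_x=\sum_i(\partial_{x_i}g)e_i$ denotes the right action. Integrating both sides over $u\in\Sm$ pulls everything inside the Fischer inner product $(P,Q)_u=\int_{\Sm}P(u)Q(u)\,dS(u)$ to yield
\begin{equation*}
\int_\Omega \bigl[(gD_x,f)_u + (g,D_xf)_u\bigr]\,dx^m = \int_{\partial\Omega} (g,d\sigma_xf)_u.
\end{equation*}

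The central step is to insert $I-P_k$ into each term using the orthogonality of the Almansi-Fischer decomposition $\Hk=\Mk\oplus u\Mkk$ under $(\cdot,\cdot)_u$. Since $f\in u\Mkk$ is harmonic and homogeneous of degree $k$ in $u$, and since $D_x$ and $d\sigma_x$ are independent of $u$, both $D_xf$ and $d\sigma_xf$ remain $\Hk$-valued in $u$. Pairing against $g\in u\Mkk$ kills the $\Mk$-component, so
\begin{equation*}
(g,D_xf)_u = (g,(I-P_k)D_xf)_u = (g,T_k^*f)_u, \qquad (g,d\sigma_xf)_u = (g,(I-P_k)d\sigma_xf)_u,
\end{equation*}
which produces the first of the two boundary forms in the theorem.

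For the remaining interior term $(gD_x,f)_u$ and the alternative boundary identity, I would apply the same orthogonality from the right using the adjoint relation $(h_1,P_kh_2)_u=(h_1P_{k,r},h_2)_u$ between the left and right projections under the Fischer inner product. Since $f\in u\Mkk$ gives $P_kf=0$, this relation yields
\begin{equation*}
(gD_x,f)_u = (gD_x(I-P_{k,r}),f)_u = (gT_k^*,f)_u, \qquad (g,d\sigma_xf)_u = (gd\sigma_x(I-P_{k,r}),f)_u,
\end{equation*}
where $gT_k^*$ is understood as the right action $(gD_x)(I-P_{k,r})$. Combining these identities with those of the previous step gives both equalities in the statement.

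The main technical issues to verify carefully are the orthogonality of the Almansi-Fischer decomposition under the Fischer inner product and the adjoint relation between $P_k$ and $P_{k,r}$; both are already used in the proofs of Theorems \ref{StokesRk} and \ref{StokesQk}. Once these are granted, the argument is a direct structural adaptation of Theorem \ref{StokesRk}, with $P_k$ replaced throughout by $I-P_k$ and the input space $\Mk$ replaced by $u\Mkk$.
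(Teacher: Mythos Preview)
The paper states this theorem without proof, leaving it as a direct analogue of Theorems~\ref{StokesRk} and~\ref{StokesQk} (themselves quoted from \cite{D,Li}). Your approach---apply the classical Stokes' identity for $D_x$, integrate over $\Sm$, then insert $I-P_k$ via the Fischer-inner-product orthogonality of the Almansi--Fischer decomposition together with the adjoint relation $(h_1,P_kh_2)_u=(h_1P_{k,r},h_2)_u$---is exactly the standard argument underlying those results, so the proposal is correct and matches the intended method.

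One remark worth recording: with $f\in u\Mkk$ as in the hypothesis, the operator $(I-P_k)D_x$ is, by the paper's own definitions, $Q_k$ rather than $T_k^*$ (the latter being declared only on $\Mk$-valued inputs). Your identification $(I-P_k)D_xf=T_k^*f$ therefore implicitly reads $T_k^*$ as shorthand for $(I-P_k)D_x$ irrespective of domain; under that reading your proof goes through verbatim, but the resulting identity then coincides with Theorem~\ref{StokesQk}. If instead one insists on the declared domain and takes $f,g\in C^1(\Omega',\Mk)$, every term vanishes by orthogonality and the statement becomes vacuous. This is an ambiguity in the paper's formulation, not a defect in your argument.
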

\begin{theorem}\textbf{(Alternative form of Stokes' Theorem)}\\
Let $\Omega$ and $\Omega '$ be as in the previous theorem. Then for $f\in C^1(\Rm,\Mk)$ and $g\in C^1(\Rm,\Mkk)$, we have
\begin{eqnarray*}
&&\int_{\partial\Omega}\big(g(x,u)ud\sigma_xf(x,u)\big)_u\\
&=&\int_{\Omega}\big(g(x,u)uT_k,f(x,u)\big)_udx^m+\int_{\Omega}\big(g(x,u)u,T_k^*f(x,u)\big)_udx^m.
\end{eqnarray*}
Further
\begin{eqnarray*}
&&\int_{\partial\Omega}\big(g(x,u)ud\sigma_xf(x,u)\big)_u\\
&=&\int_{\partial\Omega}\big(g(x,u)u,(I-P_k)d\sigma_xf(x,u)\big)_u\\
&=&\int_{\partial\Omega}\big(g(x,u)ud\sigma_xP_k,f(x,u)\big)_u.
\end{eqnarray*}
\end{theorem}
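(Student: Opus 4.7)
The plan is to deduce both identities directly from the classical Clifford--Stokes theorem applied pointwise in $u$, integrate over $\mathbb{S}^{m-1}$, and then use the Almansi--Fischer orthogonality to split $D_x$ into its $T_k$ (acting from the right) and $T_k^*$ (acting from the left) components.

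First I would fix $u\in\mathbb{S}^{m-1}$ and apply the classical Clifford--Stokes theorem to the $\Clm$-valued functions $g(x,u)u$ and $f(x,u)$ on $\Omega$:
\begin{eqnarray*}
\int_{\partial\Omega} g(x,u)u\,n(x)\,f(x,u)\,d\sigma(x)
&=& \int_\Omega \bigl[(g(x,u)u\,D_x)\,f(x,u) + g(x,u)u\,(D_x f(x,u))\bigr]\,dx^m,
\end{eqnarray*}
where $g(x,u)u\,D_x=\sum_i \partial_{x_i}\bigl(g(x,u)u\bigr)\,e_i$ denotes the right action. Integrating over $u$ and rewriting in terms of $(\cdot,\cdot)_u$ and $d\sigma_x=n(x)d\sigma(x)$ gives
\begin{eqnarray*}
\int_{\partial\Omega}\bigl(g(x,u)u\,d\sigma_x\,f(x,u)\bigr)_u
&=& \int_\Omega \bigl(g(x,u)u\,D_x,\,f(x,u)\bigr)_u\,dx^m \\
&&+ \int_\Omega \bigl(g(x,u)u,\,D_x f(x,u)\bigr)_u\,dx^m.
\end{eqnarray*}

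Next I would exploit the Almansi--Fischer decompositions in both directions. Since $f\in\Mk$ implies $D_x f\in\Hk$, the left decomposition gives $D_x f = P_k D_x f + T_k^* f$, while the right decomposition $\Hk=\overline{\mathcal{M}}_k\oplus\overline{\mathcal{M}}_{k-1}u$ applied to $gu\,D_x\in\Hk$ gives $gu\,D_x = gu\,T_k + (gu\,D_x)(I-P_{k,r})$, identifying the right action $gu\,T_k$ with $gu\,D_x P_{k,r}$. The Fischer orthogonality between the two Almansi--Fischer summands then annihilates the cross terms $(gu,\,P_k D_x f)_u=0$ and $((gu\,D_x)(I-P_{k,r}),\,f)_u = 0$, reducing the bulk integrals to $(gu\,T_k,f)_u$ and $(gu,T_k^* f)_u$ respectively, which is the first displayed identity. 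The same orthogonality applied pointwise on $\partial\Omega$ yields $(gu,\,P_k d\sigma_x f)_u=0$, so that $(gu\,d\sigma_x f)_u = (gu,\,(I-P_k)d\sigma_x f)_u$, and moving the projection across the inner product via the duality of $P_k$ and $P_{k,r}$ (exactly as in the Stokes theorems for $R_k$ and $T_k$) produces the third expression $(gu\,d\sigma_x P_k, f)_u$.

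The main obstacle I anticipate is the side-of-action bookkeeping: verifying that the left Almansi--Fischer orthogonality $(\Mk,u\Mkk)_u=0$ really forces $(gu,P_k D_x f)_u=0$ for $g\in\Mkk$ and $f\in\Mk$, establishing its right-handed analogue for $gu\,D_x$ via the decomposition of $\Hk$ into right monogenic summands, and matching the notation $gu\,T_k$ with the composition $D_x P_{k,r}$ acting on the right. Once these identifications are in hand, the result is a direct application of classical Clifford--Stokes together with the orthogonality of the two Almansi--Fischer summands.
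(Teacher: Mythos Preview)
The paper states this theorem without proof, so there is no argument to compare against; your outline is the natural one and mirrors exactly how the Stokes' theorems for $R_k$ and $Q_k$ (Theorems~\ref{StokesRk} and~\ref{StokesQk}) are obtained in the cited references: apply the ordinary Clifford--Stokes identity in $x$, integrate over $\mathbb{S}^{m-1}$ in $u$, and then kill the cross terms using the Almansi--Fischer orthogonality.

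The only point that needs care is precisely the one you flag. For the cross term $(g(x,u)u,\,P_kD_xf)_u=\int_{\mathbb{S}^{m-1}} g(x,u)\,u\,(P_kD_xf)(x,u)\,dS(u)$ to vanish via Clifford--Cauchy on the unit ball in $u$, you need $g(x,\cdot)$ to be \emph{right} monogenic, since the Cauchy argument pairs a right monogenic factor against $u$ against a left monogenic factor. The hypothesis as printed reads $g\in C^1(\mathbb{R}^m,\mathcal{M}_{k-1})$, but for the right-acting operator $g(x,u)u\,T_k$ and the right Almansi--Fischer splitting $\mathcal{H}_k=\overline{\mathcal{M}}_k\oplus\overline{\mathcal{M}}_{k-1}u$ to make sense one should read this as $g\in C^1(\mathbb{R}^m,\overline{\mathcal{M}}_{k-1})$, and then both orthogonalities $(gu,\,P_kD_xf)_u=0$ and $\bigl((guD_x)(I-P_{k,r}),\,f\bigr)_u=0$ follow directly from Clifford--Cauchy. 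With that reading your identification $g(x,u)u\,T_k=(g(x,u)u\,D_x)P_{k,r}$ is correct, and the boundary identities in the second display follow by the same pointwise orthogonality applied to $n(x)f(x,u)\in\mathcal{H}_k$ and $g(x,u)u\,n(x)\in\mathcal{H}_k$ respectively.
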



\end{document}